%\def\data{17/03/12}
%%%SIAM STYLE
%\documentclass[final]{siamltex}
%\usepackage{amsmath}
%\usepackage{amsfonts}
%\usepackage{amssymb}
%%%%%%%%%%%%%%
\documentclass[a4paper,11pt,english]{smfart}
\usepackage{amsfonts}
\usepackage{amsmath,amsthm}

\usepackage{latexsym}
\usepackage{array}
\usepackage{amssymb}
\usepackage{graphicx}
\usepackage{enumerate}
\usepackage{verbatim}
\usepackage{float}
\usepackage[english]{babel}
%\binoppenalty=9999 \relpenalty=9999

\usepackage{color}
\definecolor{marin}{rgb}   {0.,   0.3,   0.7} 
\definecolor{rouge}{rgb}   {0.8,   0.,   0.} 
\definecolor{sepia}{rgb}   {0.8,   0.5,   0.} 
\usepackage[colorlinks,citecolor=marin,linkcolor=rouge,
            bookmarksopen,
            bookmarksnumbered
           ]{hyperref}

\theoremstyle{plain} 
\newtheorem{theorem}{Theorem}[section]
\newtheorem{lemma}[theorem]{Lemma}
\newtheorem{proposition}[theorem]{Proposition}
\newtheorem{corollary}[theorem]{Corollary} 
 \theoremstyle{remark}

\newtheorem{example}[theorem]{Example}
 
\newtheorem{ass}[theorem]{Assumption}

\newcommand{\R}{  \mathbb{R}   }

\newcommand{\Z}{  \mathbb{Z}   }
\newcommand{\N}{  \mathbb{N}   }

\newcommand{\wb}{\boldsymbol{w}}
\newcommand{\Fb}{\boldsymbol{F}}
\newcommand{\Wb}{\boldsymbol{\mathsf{W}}}
\newcommand{\Us}{\mathsf{U}}
\newcommand{\Vs}{\mathsf{V}}
\newcommand{\Zs}{\mathsf{Z}}
\newcommand{\Rb}{\boldsymbol{R}}
\newcommand{\Sb}{\boldsymbol{S}}
\newcommand{\Gb}{\boldsymbol{\mathsf{G}}}
\newcommand{\gb}{\boldsymbol{g}}
\newcommand{\rb}{\boldsymbol{r}}
\newcommand{\mb}{\boldsymbol{m}}
\newcommand{\psib}{\boldsymbol{\psi}}

\newcommand{\Cb}{\mathbb{C}}

\newcommand{\dd}{\mathrm{d}}

\newcommand{\Hc}{ \mathcal{H}   }

\newcommand{\T}{  \mathbb{T}   }

\newcommand{\Ec}{  \mathcal{E}   }

\renewcommand{\phi}{  \varphi  }

\newcommand{\be}{\begin{equation}}
\newcommand{\ee}{\end{equation}}
\newcommand{\ben}{\begin{equation*}}
\newcommand{\een}{\end{equation*}}

\newcommand{\nab}{\langle \nabla \rangle_c}
\newcommand{\Norm}[2]{\|#1\|\left.\vphantom{T_{j_0}^0}\!\!\right._{#2}}

\numberwithin{equation}{section}

\author{Erwan Faou}
\address{INRIA \& ENS Cachan Bretagne  \\
Avenue Robert Schumann F-35170 Bruz, France. } 
\email{Erwan.Faou@inria.fr}

 \author{Katharina Schratz}
\address{INRIA \& ENS Cachan Bretagne  \\
Avenue Robert Schumann F-35170 Bruz, France. } 
\email{Katharina.Schratz@inria.fr}
%\urladdr{}

\begin{document}
\title[AP schemes for the KG equation in the non-relativistic regime]
{Asymptotic preserving schemes for the  Klein-Gordon equation in the non-relativistic limit regime}

\maketitle
\begin{abstract}
We consider the Klein-Gordon equation in the non-relativistic limit regime, i.e. the speed of light $c$ tending to infinity. We construct an asymptotic expansion for the solution with respect to the small parameter depending on the inverse of the square of the speed of light. As the first terms of this asymptotic can easily be simulated our approach allows us to construct numerical algorithms that are robust with respect to the large parameter $c$ producing high oscillations in the exact solution. 
\end{abstract}

\subjclass{  }
\keywords{}
\thanks{
}

%\tableofcontents
%%%%%%%%%%%%%%SIAM style
%\author{Erwan Faou \footnotemark[1] \and Katharina Schratz\footnotemark[1]}
%
%
%
%\renewcommand{\thefootnote}{\fnsymbol{footnote}}
%\footnotetext[1]{INRIA \& ENS Cachan Bretagne
%Avenue Robert Schumann F-35170 Bruz, France.
%E-mail: {\tt \{erwan.faou, katharina.schratz\}@inria.fr}}
%
%\maketitle
%\begin{abstract}
%We consider the Klein-Gordon equation in the non-relativistic limit regime, i.e. the speed of light $c$ tending to infinity. We construct an asymptotic expansion for the solution with respect to the small parameter depending on the inverse of the square of the speed of light. As the first terms of this asymptotic can easily be simulated our approach allows us to construct numerical algorithms that are robust with respect to the large parameter $c$ producing high oscillations in the exact solution. 
%\end{abstract}
%\begin{keywords}
%\end{keywords}
%
%\begin{AMS}
%\end{AMS}

%%%%%%%%%%%%%%

\section{Introduction}
The Klein-Gordon equation is a fundamental physical equation describing the motion of a spin-less particle, see for instance \cite{BjoD65}, \cite{Sak67} and \cite{Bog59} for a physical introduction on this topic and \cite{GinVe85}, \cite{BreWa81} and the references therein for the mathematical analysis, and in particular \cite{Bou96,Bam03} for some long time results when the equation is set on a compact domain, the case mainly considered in the present work.

 In the relativistic regime, i.e. the speed of light $c=1$, the Klein-Gordon equation (as a non-linear wave equation) is numerically well studied, see for instance \cite{StVaz78}, \cite{JimVaz78}, \cite{PasJimVaz95}, and,  for long time behavior results in the weakly nonlinear regime, \cite{CoHaLu08}, \cite{FG11} and \cite{FGP1,FGP2}. Here we are interested in the Klein-Gordon equation in the non-relativistic limit regime, i.e. the speed of light $c$ tending to infinity. Thus, our model problem reads
\begin{equation}
\label{eq:kgr}
\frac{1}{c^2} \partial_{tt} z - \Delta z + c^2 z = f(z), \quad z(0) = \phi, \quad  \partial_t z(0) = c^2 
\gamma,\quad c\gg 1,
\end{equation}
where we will consider essentially the case $f(z) =\lambda  |z|^{2p} z$, $\lambda \in \R$, assume that the initial condition $\varphi$ and $\gamma$ do not depend on $c$, and consider periodic boundary conditions in the space variable.  In this regime the Klein-Gordon equation has recently gained a lot of attention, see for instance \cite{MaNakOz02}, \cite{MaNak02} and \cite{Tsu84} (when Equation \eqref{eq:kgr} is set on the whole space $\R^d$). Numerically, the highly oscillatory nature of the exact solution is very challenging as standard numerical schemes require severe time step restrictions depending on the large parameter $c^2$, see for instance \cite{HLW}, \cite{EFHI09} and \cite{PetzJ97} for a numerical overview on highly oscillatory problems. In the recent work \cite{BaoD12}, Gautschi-type exponential integrators (proposed in \cite{HoLu99} for equations with high frequencies generated by the linear part) were applied to the non-relativistic Klein-Gordon equation, which allow time steps of order $\mathcal{O}(c^{-2})$. 

Following the one-term asymptotics results given in \cite{MaNak02} we  study in this paper the existence of a complete asymptotic expansion of the exact solution of \eqref{eq:kgr} in terms of $c^{-2}$ over a $c$ -independent interval $(0,T)$, and for smooth initial data. This will allow us to construct numerical schemes which do not need to obey any $c$-dependent smallness restriction. In a nutshell, the asymptotic expansion up to the first-order correction term of \eqref{eq:kgr} reads
\begin{equation}\label{eq:asympz}
z(t,x) = \textstyle \frac12 u_0(t,x) \mathrm{e}^{ic^2t} + \textstyle \frac12 \overline{v_0(t,x)} \mathrm{e}^{-ic^2t} + \mathcal{O}(c^{-2}),\qquad t\in [0,T]
\end{equation}
where $u_0$ and $v_0$ solve the coupled nonlinear Schr\"odinger equations
\begin{equation}\label{eq:nsl0}
\begin{aligned}
&i \partial_t u_0 -\frac{1}{2}\Delta u_0 = \frac{1}{2\pi}\int_0^{2\pi} f(\textstyle \frac{1}{2}(u_0+\mathrm{e}^{-2i\theta}\overline{v_0}))\,\mathrm{d}\theta,\qquad u_0(0) = \varphi - i \gamma, \\
&i \partial_t v_0 -\frac{1}{2}\Delta v_0 = \frac{1}{2\pi}\int_0^{2\pi} f(\textstyle\frac{1}{2}(v_0+\mathrm{e}^{-2i\theta}\overline{u_0}))\,\mathrm{d}\theta,\qquad v_0(0) = \overline{\varphi + i \gamma}.
\end{aligned}
\end{equation}
The essential point is that the highly-oscillating part in the asymptotic expansion \eqref{eq:asympz} is only contained in the phases $\mathrm{e}^{ic^2t}$ and $\mathrm{e}^{-ic^2t}$, but does not appear in the above Schr\"odinger equations for $u_0$ and $v_0$. In particular the numerical time integration of \eqref{eq:nsl0} can be carried out without any $c$-dependent time step restriction. 
Thus, with this asymptotic expansion we are able to obtain a well suited numerical approximation to the exact solution $z(t)$ of the Klein-Gordon equation in the non-relativistic limit $c\gg 1$, without any time step restriction as we only need to solve the Schr\"odinger equations \eqref{eq:nsl0} and multiply the numerical approximations of $u_0$ and $v_0$ with the highly-oscillatory phases $\mathrm{e}^{ic^2t}$ and $\mathrm{e}^{-ic^2t}$, respectively.  

Note that in comparison with \cite{MaNak02} and motivated by numerical implementation, we choose to work here with periodic boundary conditions, that is the space variable $x \in \T^d$ the $d$-dimensional torus. 
Moreover, we show that for sufficiently smooth initial values the asymptotic expansion of the exact solution to \eqref{eq:kgr} exists up to an arbitrary order, and can be reconstructed by superposing highly oscillatory terms to cascaded solutions of $c$-independent Schr\"odinger-like systems. 
 After a full discretization of these cascaded terms, we are able to build numerical schemes which approach the exact solution up to errors terms of order $\mathcal{O}(\tau^k+c^{-2N-2} + h^{s})$ by simulating the terms in the asymptotic expansion up to order $N$ with a numerical scheme of order $k$ in time (with step size $\tau$) and Fourier pseudo-spectral discretization on a grid with mesh size $h$. Note that here, $h$ and $\tau$ can be chosen independently of $c$, and that  $s = s(N)$ depends on the regularity of the solution and of the degree of the asymptotic expansion. %Note that to avoid long and uninteresting technical details, a rigorous proof of such an estimate is only given for the first term (i.e. $N = 1$). 
 
The paper is organized as follows. We commence in Section \ref{sec:boundi} with the essential a priori bounds on the exact solution of the Klein-Gordon equation \eqref{eq:kgr} showing the existence on a time interval independent of the large parameter $c$. In Section \ref{sec:asymp} we will derive an asymptotic expansion of the exact solution in powers of $c^{-2}$. In Section \ref{sec:firstt} we will precisely state the first terms in this expansion, which will allow us to construct numerical schemes which are robust with respect to $c$, see Section \ref{sec:nume}. Furthermore, in Section \ref{sec:enq} we analysis the asymptotic behavior of the conserved physical quantities.

\section{A priori bounds on the exact solution}\label{sec:boundi}

We consider our model problem \eqref{eq:kgr} set on the $d$-dimensional torus $\T^d$. We make the following assumptions: 
\begin{ass}
The initial values $\varphi$ and $\gamma$ are smooth functions independent of c. Furthermore we assume that the nonlinearity $f$ 
is gauge invariant and polynomial of degree $r>1$ in $z$ and $\overline{z}$. This means that for all number $\alpha \in \R$ and all $z \in \Cb$, we have $f(e^{i\alpha}z) = e^{i\alpha} f(z)$. Eventually, we assume that $f$ 
is {\em real} in the sense that for all $z \in \Cb$, we have $\overline{f(z)} = f(\overline{z})$. In particular, this shows that if $z(0)$ and $\partial_t z(0)$ are in $\R$, then the solution $z(t)$ is real for all times.  Typical examples of such nonlinearity are given by the polynomials $f(z) = \lambda |z|^{2p} z$, $p \geq 1$. 
\end{ass}

For ease of presentation and numerical motivations, we will mainly work on the $d$-dimensional torus $\T^d$, $d \in \N \backslash\{0\}$. However, as the only arguments used in the sequel are the local Lipschitz nature of the nonlinearity in Sobolev space and some explicit calculations for diagonal operators  in Fourier, the reader should be convinced that all the result of this paper can be easily extended to functions defined on $\R^d$.
 
For a function $u(x)$, $x \in \T^d$ in $H^s := H^s(\R,\Cb)$ we set 
$$
\Norm{u}{H^s}^2 = \sum_{k \in \Z^d} (1 + |k|)^{2s} |\hat u_k|^2,
$$
where 
$$
\hat u_k = \frac{1}{(2\pi)^d} \int_{\T^d} e^{-i k \cdot x} u(x) \dd x,
$$
$k \in \Z^d$, denote the Fourier coefficients associated with $u$. Here, we have set 
$$
k\cdot x = k_1 x_1 + \cdots + k_d x_d\quad \mbox{and}\quad 
|k|^2 = k_1^2 + \cdots+ k_d^2, 
$$
for $k = (k_1,\ldots,k_d) \in \Z^d$ and $x = (x_1,\ldots,x_d) \in \T^d$. 
%%We will also use the $\ell^1$-based spaces associated with the norms
%$$
%\Norm{u}{\ell_s^1} = \sum_{k \in \Z^d} (1 + |k|)^{s} |\hat u_k|. 
%$$
For a given $c > 0$, we define the operator
$$
\nab = \sqrt{- \Delta + c^2}. 
$$
In Fourier, the operator $\nab$ is represented by the diagonal operator with coefficients 
$$
(\nab)_{k\ell} = \delta_{k\ell} \sqrt{|k|^2 + c^2}, \quad k,\ell \in \Z^d, 
$$
where $\delta_{k\ell}$ denotes the Kronecker operator. In particular note that the operator $c \nab^{-1}$ is uniformly bounded with respect to $c$: we have for all $k \in \Z^d$, 
$|(c \nab^{-1})_{kk}| \leq 1$, 
which implies that for all functions $u$, $\Norm{c \nab^{-1}u}{H^s} \leq \Norm{u}{H^s}$. %A similar statement holds in the space $\ell_s^1$. 
With these notations, Equation \eqref{eq:kgr} can be written as
\begin{equation}\label{eq:kgrr}
\partial_{tt} z + c^2 \nab^2 z = c^2 f(z). 
\end{equation}
In order to rewrite the above equation as a first-order system in time, we set
\begin{equation}\label{eq:uv}
u = z - i c^{-1}\nab^{-1} \partial_t z , \quad v = \overline z - i c^{-1}\nab^{-1} \partial_t \overline z. 
\end{equation}
Note that we have 
\begin{equation}\label{eq:zuv}
z = \frac12 (u + \overline{v}), 
\end{equation}
and if $z$ is real, then $u = v$. 
A short calculation shows that in terms of the variables $u$ and $v$ equation \eqref{eq:kgrr} reads
%\begin{eqnarray*}
%\partial_t u &=& \partial_t z - i c^{-1}\nab^{-1} \partial_{tt} z\\[2ex]
%&=& \partial_t z + i c\nab  z - i c\nab^{-1} f(z)\\[2ex]
%&=& i c \nab u  - i c\nab^{-1} f(\textstyle \frac12 (u + \overline v)). 
%\end{eqnarray*}
%Hence in terms of the variables $(u,v)$, the problem to solve is given by 
\begin{equation}
\label{eq:NLSc}
\begin{array}{rcl}
i \partial_t u &=& -c\nab u + c\nab^{-1} f(\textstyle \frac12 (u + \overline v)), \\[2ex]
i \partial_t v &=& -c\nab v  + c\nab^{-1} f(\textstyle \frac12 (\overline u +  v)).
\end{array}
\end{equation}
The initial boundary conditions associated with this problem are (see \eqref{eq:kgr})
\begin{equation}
\label{eq:BCc}
u(0,x) = \varphi(x) - i c \nab^{-1}\gamma(x), \quad \mbox{and}\quad v(0,x) = \overline{\varphi(x)} - i c \nab^{-1} \overline{ \gamma(x)}. 
\end{equation}
%Note that in the following, we will assume that the functions $\varphi$ and $\gamma$ are %independent of $c$.
Eventually, we set 
\begin{equation}
\label{eq:rlb}
\wb = \begin{pmatrix} u \\[1ex] v \end{pmatrix}, \quad \boldsymbol{F}(\wb) = \begin{pmatrix} 
f(\textstyle \frac12 (u + \overline v)) \\[1ex] f(\textstyle \frac12 (\overline u +  v))
\end{pmatrix}, \quad \mbox{and}\quad \psib = \begin{pmatrix}
\varphi - i c \nab^{-1} \gamma \\[1ex]
\overline{\varphi} - i c \nab^{-1} \overline{\gamma}
\end{pmatrix}, 
\end{equation}
so that the problem \eqref{eq:NLSc}-\eqref{eq:BCc} can be written as
\begin{equation}
\label{eq:NLF}
i \partial_t \wb = - c \nab \wb + c \nab^{-1} \Fb(\wb), \quad \wb(0) = \psib. 
\end{equation}
We see that this equation is posed on $ H^s \times H^s$.
For an element $\wb = (u,v)^T \in H^s\times H^s$, we set 
$$
\Norm{\wb}{H^s}^2 := \Norm{u}{H^s}^2 + \Norm{v}{H^s}^2. 
%\quad\mbox{and}\quad
%\Norm{\wb}{\ell_s^1} := \Norm{u}{\ell_s^1} + \Norm{v}{\ell_s^1}. 
$$
In the previous and in the following, if an operator  - like $\nab$ - acts on $H^s$, we naturally extend diagonally its action on $H^s \times H^s$ by setting for instance for $\wb = (u,v)^T$, 
$$
\nab \wb = \nab \begin{pmatrix} u \\ v \end{pmatrix} := \begin{pmatrix} \nab u \\ \nab v \end{pmatrix}. 
$$
The solution of the linear equation ($F = 0$ in the previous equation) can be written as follows: For $\wb = (u,v) \in H^{s} \times H^s$, 
$$
\wb(t) = e^{i t c \nab} \psib, 
$$
where the right hand-side is easily defined in Fourier by the formula
$$
\forall\, k \in \Z^d, \quad 
\widehat{\big(e^{i t c \nab} \psib\big)}_k = 
e^{i t c \sqrt{|k|^2 + c^2}} \, \widehat \psib_k, 
$$
where $\widehat \psib_k$ denote the (vector valued) Fourier coefficients of $\psib$. 
Note that the linear flow is an isometry in the sense that for $\psib \in H^s\times H^s$, 
$$
\Norm{e^{it c \nab} \psib}{H^s}  = \Norm{\psib}{H^s}. 
$$
Note that in the sequel, and because the equation is set on the torus, we will not use any argument involving Strichartz estimates as in \cite{MaNak02} but we will require more regularity on the initial conditions $\gamma$ and $\varphi$.

%Finally, we assume that the function $f$ is polynomial of degree $r > 2$ in $z$ and $\bar z$ so that the application $\Fb$ satisfies estimates of the form 
As we assume that the function $f$ is polynomial of degree $r > 2$ in $z$ and $\bar z$ the application $\Fb$ satisfies estimates of the form 
\begin{equation}
\label{eq:lips}
\Norm{\Fb(\wb_1) - \Fb(\wb_2)}{H^s} \leq C( 1 + \Norm{\wb_1}{H^s} + \Norm{\wb_2}{H^s})^{r-1} \Norm{\wb_1 - \wb_2}{H^s}
\end{equation}
for $s > d/2$, 
with some constants $C$, and for $\wb_1$ and $\wb_2$ in $H^s \times H^s$. % Note that the same estimate holds true in $\ell_s^1$ and $s \geq 0$. 
These estimates are consequences of classical bilinear estimates in $H^s$: 
\begin{equation}
\label{eq:bil1}
\Norm{uv}{H^s} \leq C_s \Norm{u}{H^s}\Norm{v}{H^s}\quad\mbox{for}\quad s > d/2, 
\end{equation}
for some constants $C_s$. 
Moreover the assumption that $f$ is real implies that for all $w \in H^s\times H^s$, $\overline{\Fb(\wb)} = \Fb(\overline{\wb})$. 

%Moreover we assume that $f$ is gauge invariant, which implies that for all $w \in H^s\times H^s$, $\overline{\Fb(\wb)} = \Fb(\overline{\wb})$. 

With these notations, the mild formulation of \eqref{eq:NLF} is given by 
$$
\wb(t) = e^{i t c \nab} \psib + \int_{0}^t  c \nab^{-1} e^{i (t-s) c \nab} \Fb(\wb(s)) \dd s. 
$$
Using the fact that the linear flow $e^{i t c \nab}$ is an isometry in $H^s$ and that $c \nab^{-1}$ is a bounded operator in $H^s$, in particularly uniformly bounded with respect to $c$, we easily derive the following result by a standard fix point argument combined with \eqref{eq:lips}:

\begin{proposition}
\label{prop21}
Let $s > d/2$ and $M > 0$ a given constant. Then there exist constants $T_s>0$ and $B_s >0$ such that for all $c > 0$ and for all $\psib\in H^s \times H^s$ such that $\Norm{\psib}{H^s} \leq M$,  there exists a mild solution $t \mapsto \wb(t) \in \mathcal{C}^1((0,T_s), H^s \times H^s)$ to the equation \eqref{eq:NLF}, and such that 
$$
\forall\, t \in (0,T_s), \quad \Norm{\wb(t)}{H^s} \leq B_s. 
$$
\end{proposition}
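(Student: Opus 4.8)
The plan is the standard contraction-mapping argument applied to the Duhamel (mild) formulation of \eqref{eq:NLF}, with care taken that every constant produced is independent of $c$. Fix $s>d/2$ and $M>0$. On $X_T:=\mathcal{C}([0,T],H^s\times H^s)$, equipped with $\SNorm{\wb}{X_T}:=\sup_{t\in[0,T]}\Norm{\wb(t)}{H^s}$, I would introduce the map
\[
(\Phi\wb)(t)=e^{itc\nab}\psib+\int_0^t c\nab^{-1}e^{i(t-\sigma)c\nab}\Fb(\wb(\sigma))\,\dd\sigma ,
\]
and work inside the closed ball $\mathcal{B}_R=\{\wb\in X_T:\SNorm{\wb}{X_T}\le R\}$ with $R:=2M$. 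The only structural ingredients are the two facts already recorded above: $e^{itc\nab}$ is an isometry of $H^s\times H^s$, and $c\nab^{-1}$ is bounded on $H^s$ with norm $\le1$ \emph{uniformly in $c$}; together with the polynomial Lipschitz estimate \eqref{eq:lips}, these are exactly what will make $T_s$ and $B_s$ independent of $c$.

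\textbf{Self-mapping.} Gauge invariance forces $\Fb(0)=0$ (take $z=0$ in $f(e^{i\alpha}z)=e^{i\alpha}f(z)$), so \eqref{eq:lips} with $\wb_2=0$ gives $\Norm{\Fb(\wb)}{H^s}\le C(1+R)^{r-1}R$ on $\mathcal{B}_R$. Using the isometry and $\Norm{c\nab^{-1}\cdot}{H^s}\le\Norm{\cdot}{H^s}$ under the integral,
\[
\SNorm{\Phi\wb}{X_T}\le\Norm{\psib}{H^s}+T\,C(1+R)^{r-1}R\le M+T\,C(1+R)^{r-1}R ,
\]
which is $\le 2M=R$ whenever $T\,C(1+R)^{r-1}R\le M$. \textbf{Contraction.} For $\wb_1,\wb_2\in\mathcal{B}_R$, the same two bounds together with \eqref{eq:lips} give
\[
\SNorm{\Phi\wb_1-\Phi\wb_2}{X_T}\le T\,C(1+2R)^{r-1}\,\SNorm{\wb_1-\wb_2}{X_T},
\]
so $\Phi$ is a $\tfrac12$-contraction once $T\,C(1+2R)^{r-1}\le\tfrac12$. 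Both conditions on $T$ depend only on $M$, $s$, $d$ and $f$; choosing $T_s>0$ to satisfy them, Banach's fixed point theorem yields a unique $\wb\in\mathcal{B}_R\subset\mathcal{C}([0,T_s],H^s\times H^s)$ with $\Phi\wb=\wb$, i.e. a mild solution of \eqref{eq:NLF} satisfying $\SNorm{\wb}{X_{T_s}}\le R=:B_s$ — the asserted a priori bound.

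\textbf{Regularity.} The $\mathcal{C}^1$-in-time regularity is the usual mild-to-classical upgrade: once $\wb\in\mathcal{C}([0,T_s],H^s\times H^s)$ is known, $\sigma\mapsto\Fb(\wb(\sigma))$ is continuous into $H^s$ by \eqref{eq:lips}, so the Duhamel term is continuously differentiable in $t$, while $t\mapsto e^{itc\nab}\psib$ is differentiable with derivative $ic\nab e^{itc\nab}\psib$; differentiating the fixed-point identity then recovers \eqref{eq:NLF} in the strong sense (the time derivative lands a priori one Sobolev index lower, which is the standard cost and does not affect the $c$-uniform bound). I do not expect a real obstacle here — this is a soft local well-posedness statement — and the single point that genuinely requires attention is the uniformity in $c$: one must check that no $c$-dependent factor sneaks into the nonlinear estimate, the linear isometry, or the bound on $c\nab^{-1}$, which is precisely what the normalisations built into \eqref{eq:NLF} guarantee.
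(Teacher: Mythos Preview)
Your argument is correct and is exactly the ``standard fixed point argument'' the paper invokes: contraction on the Duhamel formulation using the $H^s$-isometry of $e^{itc\nab}$, the uniform bound $\Norm{c\nab^{-1}}{}\le 1$, and the polynomial Lipschitz estimate \eqref{eq:lips}, with all constants manifestly independent of $c$. The paper gives no details beyond this one-line sketch, so there is nothing to compare; your remark that the time derivative naturally lands in $H^{s-1}$ rather than $H^s$ is a fair caveat on the statement as written.
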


%Note that again the same result holds true in $\ell_s^1$ with $s \geq 0$. 
In terms of $u$ and $v$, the previous proposition can be reformulated as

\begin{corollary}
\label{cor21}
Let $s > d/2$, and assume that $\varphi \in H^s$ and $\gamma\in H^s$ are given. Then there exist constants $T_s>0$ and $B_s >0$ such that for all $c$ there exists a mild solution $(u(t), v(t)) \in \mathcal{C}^1((0,T_s), H^s \times H^s)$ to the equation \eqref{eq:NLSc} satisfying the initial boundary conditions \eqref{eq:BCc} and such that 
$$
\forall\, t \in (0,T_s), \quad \Norm{u(t)}{H^s} + \Norm{v(t)}{H^s} \leq B_s. 
$$

\end{corollary}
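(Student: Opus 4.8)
The plan is to deduce this corollary directly from Proposition \ref{prop21} by unwinding the change of variables \eqref{eq:uv}--\eqref{eq:rlb}. The key observation is that the map $(\varphi,\gamma)\mapsto\psib$ defined in \eqref{eq:rlb} is bounded from $H^s\times H^s$ into $H^s\times H^s$ \emph{uniformly in $c$}: indeed, since $\|c\nab^{-1}u\|_{H^s}\le\|u\|_{H^s}$ for all $c>0$, we get
\[
\Norm{\psib}{H^s}^2 = \Norm{\varphi - ic\nab^{-1}\gamma}{H^s}^2 + \Norm{\overline\varphi - ic\nab^{-1}\overline\gamma}{H^s}^2 \le 2\(\Norm{\varphi}{H^s} + \Norm{\gamma}{H^s}\)^2 =: M^2.
\]
So first I would fix $\varphi,\gamma\in H^s$, set $M$ as above, and apply Proposition \ref{prop21} with this $M$: this yields constants $T_s,B_s>0$ (depending only on $s$ and $M$, hence only on $s,\varphi,\gamma$, but \emph{not} on $c$) and, for every $c>0$, a mild solution $\wb\in\Ca^1((0,T_s),H^s\times H^s)$ of \eqref{eq:NLF} with $\wb(0)=\psib$ and $\Norm{\wb(t)}{H^s}\le B_s$ on $(0,T_s)$.

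Next I would write $\wb(t)=(u(t),v(t))^T$ and check that \eqref{eq:NLF} is exactly the system \eqref{eq:NLSc} with initial data \eqref{eq:BCc}: this is precisely the content of the definitions \eqref{eq:NLSc}, \eqref{eq:BCc}, \eqref{eq:rlb}, so it is a matter of reading off the two components. The bound $\Norm{u(t)}{H^s}+\Norm{v(t)}{H^s}\le 2\,B_s$ follows from $\Norm{u(t)}{H^s}^2+\Norm{v(t)}{H^s}^2=\Norm{\wb(t)}{H^s}^2\le B_s^2$ (replacing $B_s$ by $2B_s$, or by $\sqrt2\,B_s$, in the statement as needed). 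Since the constants produced by Proposition \ref{prop21} did not depend on $c$, neither do $T_s$ and $B_s$ here.

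There is essentially no obstacle: the only point requiring any care is the uniform-in-$c$ control of the initial datum $\psib$, which is exactly where the uniform bound $\Norm{c\nab^{-1}u}{H^s}\le\Norm{u}{H^s}$ recorded above Equation \eqref{eq:kgrr} enters. Everything else is a transcription of the first-order reformulation. One should also remark that the correspondence between $\wb$ and the original Klein--Gordon variable $z$ via \eqref{eq:zuv}, together with the reality assumption on $f$ and the relation $\overline{\Fb(\wb)}=\Fb(\overline\wb)$, guarantees that if $\varphi,\gamma$ are real then $u=v$ and $z=\tfrac12(u+\overline v)$ is real; but this is not needed for the statement of the corollary and can be left as a side comment.
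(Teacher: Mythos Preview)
Your proposal is correct and follows essentially the same approach as the paper: bound $\Norm{\psib}{H^s}$ uniformly in $c$ using the uniform boundedness of $c\nab^{-1}$, then apply Proposition~\ref{prop21}. The paper's proof is just a two-line version of what you wrote, without the explicit unpacking of components or the side remark on the real case.
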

\begin{proof}
Using the expression \eqref{eq:rlb} for $\psib$ in terms of $\varphi$ and $\gamma$, and the fact that $c \nab^{-1}$ is uniformly bounded in with respect to $c$, we obtain that $\Norm{\psib}{H^s} \leq M$, a constant independent of $c$. This shows that Proposition \ref{prop21} applies. 
\end{proof}
This corollary immediately gives the existence of the exact solution $z(t)$ of \eqref{eq:kgr} on a time interval $(0,T_s)$ and with $H^s$-bounds both independent of $c$. 

\section{Asymptotic expansion}\label{sec:asymp}

On the $c$-independent time interval $(0,T_s)$ given by the previous results, we search $w$ (at least formally) under the form 
$$
\wb(t,x) = \Wb(t,c^2 t,x) = \begin{pmatrix} 
\Us(t,c^2 t,x) \\[1ex] \Vs(t,c^2t,x)
\end{pmatrix}, 
$$
where we separate the high oscillations from the slow time dependency of the solution. 
Here,  $\Wb(t,\theta,x)$ is a function defined on $\R \times \T \times \T^d$ and taking values in $\Cb^2$. 
The equation for $\Wb$ is the transport equation (see \eqref{eq:NLF})
\begin{equation}
\label{eq:trans}
i \partial_t \Wb + i c^2 \partial_\theta \Wb = -c\nab \Wb + c\nab^{-1} \Fb(\Wb), 
\end{equation}
with boundary condition 
\begin{equation}
\label{eq:psiinit}
\Wb(0,0,x) = \psib(x). 
\end{equation}
In a first step, we expand the previous equations with respect to the small parameter $c^{-2}$ in the following sense: 
For a given $k$, we can write 
\begin{eqnarray}
\label{eq:alphn}
c\sqrt{k^2 + c^2} = c^2\sqrt{ 1 + \frac{|k|^2}{c^2}} &=& c^2 + \frac12 |k|^2 + \sum_{n \geq 2} \alpha_n c^{2-2n} |k|^{2n}, \nonumber\\
&=&  c^2 + \frac12 |k|^2 + \sum_{n \geq 1} \alpha_{n+1} c^{-2n} |k|^{2n +2}, 
\end{eqnarray}
for some coefficients $\alpha_n$ defining a power series of convergent radius $1$. 
However, as $k$ is an arbitrary integer, the meaning of the previous expansion in terms of operators has to be understood in the sense of asymptotic expansions; 
this means that for a given sufficiently smooth function $\Wb$, we can write 
$$
c \nab \Wb = c^2 \Wb - \frac12 \Delta \Wb + \sum_{n \geq 1}  \alpha_{n+1} c^{-2n} (-\Delta)^{n+1} \Wb, 
$$
in the sense that for all $N \in \N$, there exists a constant $C_N$ such that  if $\wb \in H^{s + 2N + 4} \times H^{s + 2N +4}$ for some $s \in \N\backslash\{0\}$, then  for all $c > 0$, 
$$ 
\Norm{c \nab \wb - c^2 \wb +\frac12 \Delta \wb - \sum_{n \geq 1}^{N} \alpha_{n+1} c^{-2n} \Delta^{n+1} \wb}{H^s} \leq C_N c^{-2N -2} \Norm{\wb}{H^{s + 2N + 4}}. 
$$
This last equation can be rigorously proved by using Taylor expansions of the function $x \mapsto \sqrt{1 + x}$, $x > 0$ and Fourier decomposition of $\Wb$. 
Similarly, we can write 
$$
c \nab^{-1} = \Big(1 - \frac{\Delta}{c^2}\Big)^{-1/2} = 1 + \frac{1}{2c^2} \Delta + \sum_{n \geq 2} \beta_n c^{-2n} (-\Delta)^{n}, 
$$
for some coefficients $\beta_n$. Hence in view of \eqref{eq:rlb} the initial condition $\psib$ can be expanded in the sense of asymptotic expansion as 
$$
\psib = \sum_{n \geq 0} c^{-2n} \psib_n(x), 
$$ 
with 
\begin{equation}
\label{eq:psin}
\psib_0 = \begin{pmatrix}
\varphi - i \gamma \\[1ex]
\overline{\varphi} - i \overline{\gamma} 
\end{pmatrix},  \quad
\psib_1 =  -\textstyle\frac{i}{2} \Delta \begin{pmatrix}
  \gamma \\[1ex]
  \overline{\gamma}
\end{pmatrix}, \quad
\mbox{and}\quad 
\psib_n = \beta_n(-\Delta)^{n} 
\begin{pmatrix}
  \gamma \\[1ex]
  \overline{\gamma}
\end{pmatrix}. 
\end{equation}

Similarly, Equation \eqref{eq:trans} can be expanded in the sense of asymptotic expansion for smooth $\Wb$ as 
\begin{equation*}
\begin{aligned}
 c^2(i \partial_\theta   + 1) \Wb =& - i \partial_t \Wb +\textstyle \frac12 \Delta \Wb  +  \Fb(\Wb)  +\displaystyle \sum_{n\geq 1} c^{-2n} \Fb_n(\Wb),
\end{aligned}
\end{equation*}
where 
$$
\Fb_n(\Wb) = \alpha_{n+1} (-\Delta)^{n+1} \Wb + \beta_n (-\Delta)^n \Fb(\Wb). 
$$
For later use we will give the first term $\Fb_1(\Wb)$: For $\Wb = (\Us,\Vs)^T$, 
\begin{equation}\label{eq:F1}
\Fb_1(\Wb)= \begin{pmatrix}
\textstyle \frac18 \Delta ^2 \Us + \textstyle \frac12 \Delta f(\textstyle\frac12(\Us+\overline{\Vs}))\\[1ex]
\textstyle \frac18 \Delta ^2 \Vs + \textstyle \frac12 \Delta f(\textstyle\frac12(\Vs+\overline{\Us}))
\end{pmatrix}.
\end{equation}
Note that for $s > d/2$, can prove the following estimates:  
$$
\forall\, n \geq 1, \quad \Norm{\Fb_n(\Wb)}{H^{s}} \leq C_n \Norm{\Wb}{H^{s+ 2 + 2n}}^r, 
$$
where $r$ is the polynomial degree of $f$ (see \eqref{eq:bil1}), and for some constant $C_n$ depending on $n$ and $s$. %A similar statement holds true in the spaces $\ell_s^1$ (see \eqref{eq:bil2}). 
We now look for an asymptotic expansion for $\Wb$ as
$$
\Wb(t,\theta,x) = \sum_{n \geq 0} c^{-2n} \Wb_n (t,\theta,x). 
$$
With this expression, we can  expand the nonlinear term $\Fb(\Wb)$ using the fact that $f$ is polynomial. We obtain an expansion of the form 
$$
\Fb(\Wb) = \Fb(\Wb_0) + \sum_{n \geq 1} c^{-2n} \dd \Fb(\Wb_0) \cdot \Wb_n + \Sb_n(\Wb_j \, |\, j \leq n-1),
$$
where $\Sb_n(\Wb_j \, |\, j \leq n-1)$ denotes a nonlinear term depending only on $\Wb_j$ for $j \leq n-1$. Also note that this term is polynomial in the functions $\Wb_j$. Here,  $\dd \Fb(\Wb_0)$ is the differential of $\Fb$ at  $\Wb_0$. It is a polynomial term of oder $r-1$, in $\Wb_0$ if $f$ is polynomial of degree $r$. 

Hence we get a recurrence relation of the form 
\begin{equation}
\label{eq:induc0}
(i \partial_\theta +1)\Wb_0  = 0, \quad \mbox{and}\quad (i \partial_\theta +1)\Wb_1    =  (- i \partial_t  + \textstyle \frac12\Delta) \Wb_{0}  +  \Fb(\Wb_{0}), 
\end{equation}
and for $n \geq 2$, 
\begin{multline}
\label{eq:induc}
(i \partial_\theta +1)\Wb_n    =  (- i \partial_t  + \textstyle \frac12\Delta) \Wb_{n-1}  +  \dd \Fb(\Wb_{0}) \cdot \Wb_{n-1} + \Rb_n(\Wb_j \, |\, j \leq n-2), 
\end{multline}
where $\Rb_n(\Wb_j \, |\, j \leq n-2)$ denotes a term depending only on $\Wb_j$ for $j \leq n-2$ that is polynomial in the functions $\Wb_j$ and their derivatives. 
Using the previous estimates we obtain that for all $n$ and all collections $\Wb_j$, $j = 0,\ldots, n-2$ of sufficiently smooth functions, 
\begin{equation}
\label{eq:derivW}
\Norm{\Rb_n(\Wb_j \, |\, j \leq n-2)}{H^s} \leq C_n \big(\max_{j = 1,\ldots,n-2}\Norm{\Wb_j}{H^{s + 2(n-j)}}\big)^r , 
\end{equation}
where the constant $C_n$ does not depend on the $\Wb_j$'s (but depends on $n$ and $s$). Eventually, the initial condition \eqref{eq:psiinit} yields the conditions 
\begin{equation}
\label{eq:psibn}
\forall\, n \geq 0, \quad \Wb_n(0,0,x) = \psib_n(x),
\end{equation}
where $\psib_n$ is given by the expansion \eqref{eq:psin}. 

To solve the previous induction relation \eqref{eq:induc} and \eqref{eq:psibn}, we impose the initial condition $\Wb_j = 0$ for $j < 0$. We also introduce the following space: we say that $\Gb(t,\theta, x) \in \Hc_{T}^s$ if $\Gb$ is a trigonometric polynomial in $\theta$ with coefficients in $\mathcal{C}^{1}((0,T), H^s \times H^s)$. In other words, we can write $G$ as
$$
\Gb(t,\theta,x) = \sum_{a\in \Z, \, |a| \leq p } e^{ia\theta} \gb^{(a)}(t,x),
$$
where $p < + \infty$ and 
with $\gb^{(a)}(t,x) \in \mathcal{C}^{1}((0,T), H^s \times H^s)$. By construction, as the nonlinearity is assumed to be polynomial, it is clear that if for $j = 0,\ldots,n-1$, $\Wb_j$, is an element of $\Hc_{T}^{s + 2 (n-j)}$, then we have (see \eqref{eq:derivW}), 
$$
\Rb_n(\Wb_j \, |\, j \leq n-2)\in \Hc_{T}^{s}. 
$$

We will use repeatedly the following result: 
\begin{lemma}[Solution of the homological equation]
Let $T \in \R$ and $s > 0$.  
Assume that $\Gb(t,\theta,x) \in \Hc_{T}^s$ is given.
Then there exists a unique solution $\Wb_*(t,\theta,x) \in  \Hc_{T}^s$ satisfying the equations
\begin{equation}
\label{eq:homo}
(i \partial_\theta +1)\Wb_*(t,\theta,x)    = \Gb(t,\theta,x) - \frac{1}{2\pi}\int_{\T} e^{-i \theta} \Gb(t,\theta,x) \dd \theta , 
\end{equation}
and
\begin{equation}
\label{eq:train}
%\frac{1}{(2\pi)^d}\int_{\T^d} e^{-i \theta} \Wb(t,\theta,x) \dd \theta = 0. 
\forall\, t \in (0,T), \quad \forall x \in \T^d, \quad \frac{1}{2\pi}\int_{\T} e^{-i \theta} \Wb_*(t,\theta,x) \dd \theta  = 0. 
\end{equation}
Moreover, for all function $\wb(t,x) \in \mathcal{C}^1((0,T), H^s \times H^s)$, the function 
$$
\Wb(t,\theta,x) = e^{i\theta } \wb(t,x) + \Wb_*(t,x). 
$$
satisfies the equation \eqref{eq:homo}. 
\end{lemma}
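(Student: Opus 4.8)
The equation $(i\partial_\theta + 1)\Wb_* = \Gb - \widehat{\Gb}$, where $\widehat{\Gb}(t,x) := \frac{1}{2\pi}\int_{\T} e^{-i\theta}\Gb(t,\theta,x)\,\dd\theta$ denotes the coefficient of $e^{i\theta}$ in the Fourier expansion of $\Gb$ in $\theta$, is diagonal with respect to the Fourier basis $(e^{ia\theta})_{a\in\Z}$. So the natural approach is to expand $\Gb(t,\theta,x) = \sum_{|a|\le p} e^{ia\theta}\gb^{(a)}(t,x)$ and $\Wb_*(t,\theta,x) = \sum_{a\in\Z} e^{ia\theta}\wb_*^{(a)}(t,x)$, and to match Fourier coefficients in $\theta$. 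Applying $(i\partial_\theta + 1)$ to $e^{ia\theta}\wb_*^{(a)}$ produces $(1-a)e^{ia\theta}\wb_*^{(a)}$, so for each $a\ne 1$ we are forced to set $\wb_*^{(a)} = (1-a)^{-1}\gb^{(a)}$, while for $a=1$ the right-hand side coefficient is $\gb^{(1)} - \widehat{\Gb} = \gb^{(1)} - \gb^{(1)} = 0$, so the equation imposes no constraint on $\wb_*^{(1)}$; the extra normalization \eqref{eq:train}, which says precisely that the $e^{i\theta}$-coefficient of $\Wb_*$ vanishes, then forces $\wb_*^{(1)} = 0$. This pins down $\Wb_*$ uniquely and shows it has the same (finite) set of active $\theta$-frequencies as $\Gb$ minus possibly the frequency $1$; in particular $\Wb_* \in \Hc_T^s$ since each $\wb_*^{(a)} = (1-a)^{-1}\gb^{(a)}$ inherits the regularity $\mathcal{C}^1((0,T),H^s\times H^s)$ of $\gb^{(a)}$, and the operations involved are continuous in $H^s$.

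The steps I would carry out, in order: first, record the Fourier expansion of $\Gb$ in $\theta$ and compute the action of $(i\partial_\theta+1)$ mode by mode; second, observe that $\Gb - \widehat{\Gb}$ has vanishing $e^{i\theta}$-coefficient, and define $\wb_*^{(a)} := (1-a)^{-1}\gb^{(a)}$ for $a\ne 1$ and $\wb_*^{(1)} := 0$; third, check that $\Wb_* := \sum_{a\ne 1}(1-a)^{-1}e^{ia\theta}\gb^{(a)}$ is a trigonometric polynomial in $\theta$ (finitely many terms, since $\Gb$ has finitely many) with the required coefficient regularity, hence lies in $\Hc_T^s$, and that it satisfies both \eqref{eq:homo} and \eqref{eq:train}; fourth, prove uniqueness by subtracting two solutions: the difference $\Db$ solves $(i\partial_\theta+1)\Db = 0$ with vanishing $e^{i\theta}$-coefficient, so mode by mode $(1-a)\db^{(a)} = 0$ forces $\db^{(a)} = 0$ for $a\ne 1$, and the normalization kills $\db^{(1)}$ too. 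Finally, for the last assertion, add a homogeneous term: if $\wb(t,x)\in\mathcal{C}^1((0,T),H^s\times H^s)$ then $e^{i\theta}\wb(t,x)$ is annihilated by $(i\partial_\theta+1)$ (since the mode $a=1$ gives factor $1-a=0$), so $\Wb := e^{i\theta}\wb + \Wb_*$ still satisfies \eqref{eq:homo}; note this $\Wb$ no longer satisfies \eqref{eq:train} unless $\wb=0$, which is exactly why \eqref{eq:train} is needed to single out $\Wb_*$. (The statement in the excerpt writes $\Wb_*(t,x)$ in this last line, which should read $\Wb_*(t,\theta,x)$.)

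There is essentially no hard analytic obstacle here — the result is a linear algebra computation in the $\theta$-frequency variable, combined with the trivial observation that $|1-a|\ge 1$ for every integer $a\ne 1$, so the "small divisors" are in fact bounded below and the inversion is bounded on $H^s$ uniformly. The only point requiring a modicum of care is bookkeeping: making sure that the resonant mode $a=1$ is handled consistently across existence (the source term has no $e^{i\theta}$-component, so solvability holds), uniqueness (the kernel of $i\partial_\theta + 1$ on trigonometric polynomials is exactly $\mathbb{C}e^{i\theta}$-multiples, killed by \eqref{eq:train}), and the affine structure of the solution set (any solution differs from $\Wb_*$ by such a kernel element). I would present the whole argument by Fourier expansion in $\theta$ in a few lines, emphasizing the identity $(i\partial_\theta+1)e^{ia\theta} = (1-a)e^{ia\theta}$ as the mechanism behind all three claims.
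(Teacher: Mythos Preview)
Your proposal is correct and follows essentially the same approach as the paper's proof: expand $\Gb$ and $\Wb_*$ as trigonometric polynomials in $\theta$, use that $(i\partial_\theta+1)e^{ia\theta}=(1-a)e^{ia\theta}$ to reduce to the mode-by-mode equations $(1-a)\wb_*^{(a)}=\gb^{(a)}-\delta_a^1\gb^{(1)}$, solve for $a\neq 1$, and fix $\wb_*^{(1)}=0$ via the normalization~\eqref{eq:train}; the final assertion is the kernel statement for $i\partial_\theta+1$. Your write-up is in fact slightly more detailed on uniqueness and correctly flags the typo $\Wb_*(t,x)\to\Wb_*(t,\theta,x)$ in the statement.
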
 
\begin{proof}
We decompose $\Gb(t,\theta,x)$ in Fourier in $\theta$: we can express this function as 
$$
\Gb(t,\theta,x) = \sum_{|a| \leq p} e^{i a\theta} \gb^{(a)} (t,x), 
$$
with $p < + \infty$ and 
where $\gb^{(a)}(t,x) \in \mathcal{C}^1((0,T),H^s \times H^s)$. Note that 
$$
\frac{1}{2\pi}\int_{\T} e^{-i \theta} \Gb(t,\theta,x) \dd \theta  = \gb^{(1)}(t,x). 
$$
Let us seek $\Wb_*$ under the form
$$
\Wb_{*}(t,\theta,x) = \sum_{|a| \leq p} e^{i a \theta} \wb_*^{(a)}(t,x), 
$$
for some unknown functions $\wb_*^{(a)}$ in $\mathcal{C}^1((0,T),H^s \times H^s)$. 
Equation \eqref{eq:homo} can be written as
$$
\forall\, |a| \leq p , \quad
(1 - a) \wb_*^{(a)}(t,x) = \gb^{(a)}(t,x) - \delta_a^1 \gb^{(1)}(t,x), 
$$
where $\delta_a^1$ denotes the Kronecker symbol. 
It is clear that this equation is solvable for the functions $\wb_*^{(a)}(t,x)$, $a \neq 1$ and that with the condition \eqref{eq:train} we can set $\wb_*^{(1)}(t,x) = 0$. 
The last statement is a consequence of the fact that the kernel of the operator $i \partial_\theta + 1$ is the space spanned by $e^{i\theta}$. 
\end{proof}

We are now ready to state the main result of this section: 

\begin{theorem}\label{thm:main}
Let $N \in \N$, $s > d/2$, $c_0 > 0$ and 
assume that $\varphi\in H^{s + 2N + 4}$ and $\gamma \in  H^{s + 2N + 4}$ are given.  Then there exists $T = T(s,N) > 0$, and a sequence of  functions $\Wb_n(t,\theta,x) \in \Hc_{T}^{s + 2(N-n)}$, satisfying the following properties: $\Wb_0 = \wb_0 e^{i \theta}$ where $\wb_0$ satisfies the equation 
\begin{equation}
\label{eq:nlsw0}
i \partial_t \wb_0 =\textstyle \frac12 \Delta \wb_0 + \langle \Fb\rangle(\wb_0), \quad \wb_0(0) = \psib_0, 
\end{equation}
where $\psib_0$ is defined in \eqref{eq:psin} and where by definition, 
$$
\langle \Fb\rangle(\wb) = \frac{1}{2\pi} \int_0^{2\pi} 
\begin{pmatrix}
f(u + e^{-2i\theta}\overline {v}) \\�f(v + e^{-2 i \theta} \overline u)
\end{pmatrix}  \dd \theta, \quad \mbox{if} \quad \wb = \begin{pmatrix}
u \\�v \end{pmatrix}, 
$$
and for all $n$, $\Wb_n$ solve the equations \eqref{eq:induc}-\eqref{eq:psibn} for $n = 0,\ldots, N$, and can be decomposed into trigonometric polynomials of the form 
\begin{equation}
\label{eq:Wbn}
\Wb_n = \sum_{|a| \leq p_n} e^{ia \theta} \wb_n^{(a)}(t,x),
\end{equation}
with $p_n < +\infty$, and 
where $\wb_n^{(a)}(t,x) \in \mathcal{C}((0,T), H^{s + 2(n-N)} \times  H^{s + 2(n-N)} )$. 
Moreover, there exists a constant $C_N$ such that for $c > c_0$, if  $\wb(t)$ denotes the solution given by Proposition \ref{prop21} with initial value $\psib$ given by \eqref{eq:rlb} in terms of $\varphi$ and $\gamma$, then 
\begin{equation}
\label{eq:asympt}
\forall\, t \leq T, \quad 
\Norm{\wb(t,x) - \sum_{n = 0}^N  c^{-2n} \Wb_n (t,c^{-2}t, x)}{H^{s}} \leq C_N c^{-2 N - 2}. 
\end{equation}

\end{theorem}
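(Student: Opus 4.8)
The plan is to construct the functions $\Wb_n$ inductively using the homological equation lemma, and then to close the argument by a stability (Gronwall-type) estimate comparing the exact solution $\wb$ with the truncated asymptotic expansion. First I would observe that the leading-order relation $(i\partial_\theta + 1)\Wb_0 = 0$ forces $\Wb_0 = \wb_0(t,x)e^{i\theta}$ (the kernel of $i\partial_\theta+1$), and plugging this into the $n=1$ equation \eqref{eq:induc0} and applying the lemma, the solvability condition (that the coefficient of $e^{i\theta}$ on the right-hand side be removed) is exactly the statement that $\wb_0$ solves \eqref{eq:nlsw0}; here one must carefully compute $\frac{1}{2\pi}\int_\T e^{-i\theta}\Fb(\wb_0 e^{i\theta})\,\dd\theta$ and check it equals $\langle\Fb\rangle(\wb_0)$, using gauge invariance of $f$ so that $f(\frac12(u_0 e^{i\theta} + \overline{v_0}e^{-i\theta})) = \frac{e^{i\theta}}{2^r}\cdot(\text{something}) \cdot$, i.e.\ the factor $e^{i\theta}$ pulls out and the $\theta$-average produces the advertised integral. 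By local well-posedness for the Schr\"odinger-type system \eqref{eq:nlsw0} (same fixed-point argument as Proposition \ref{prop21}, since $\langle\Fb\rangle$ is still polynomial hence locally Lipschitz in $H^s$ for $s>d/2$), there is a time $T=T(s,N)$ on which $\wb_0 \in \mathcal{C}^1((0,T),H^{s+2N+2}\times H^{s+2N+2})$ with bounds depending only on $N,s$ and the data.

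Next, inductively for $n=2,\ldots,N$: assuming $\Wb_0,\ldots,\Wb_{n-1}$ have been constructed as trigonometric polynomials in $\theta$ with coefficients in $\mathcal{C}^1((0,T),H^{s+2(N-j)}\times H^{s+2(N-j)})$, the right-hand side of \eqref{eq:induc} lies in $\Hc_T^{s+2(N-n)+?}$ — one has to track regularity losses carefully: each application of $\frac12\Delta$ or the $\Rb_n$ term costs two derivatives, and the estimate \eqref{eq:derivW} together with \eqref{eq:F1}-type bounds shows the loss is exactly accounted for by the shifting index $s+2(N-n)$. The term $\dd\Fb(\Wb_0)\cdot\Wb_{n-1}$ is linear in $\Wb_{n-1}$ with polynomial coefficients in $\wb_0$, so it also stays in the right space. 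Applying the homological lemma gives $\Wb_n \in \Hc_T^{s+2(N-n)}$ up to the addition of a free term $e^{i\theta}\wb_n(t,x)$; this free term $\wb_n$ is then fixed by requiring the solvability condition at the next level (the $n+1$ equation), which produces a \emph{linear} inhomogeneous Schr\"odinger equation for $\wb_n$ of the form $i\partial_t\wb_n = \frac12\Delta\wb_n + \dd\langle\Fb\rangle(\wb_0)\cdot\wb_n + (\text{known source})$, globally well-posed on $(0,T)$ by linearity; the initial condition $\wb_n(0) = $ (appropriate component of $\psib_n$ minus contributions from $\Wb_{*}$) is read off from \eqref{eq:psibn}. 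I would remark that the regularity requirement $\varphi,\gamma\in H^{s+2N+4}$ is dictated by: two derivatives lost per level of the cascade (total $2N$), plus the extra $+4$ coming from the $c\nab$-expansion remainder in the bound stated after \eqref{eq:alphn}.

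For the final estimate \eqref{eq:asympt}, set $\wb^{[N]}(t,x) := \sum_{n=0}^N c^{-2n}\Wb_n(t,c^{-2}t,x)$ and let $\rb(t) := \wb(t) - \wb^{[N]}(t)$. Differentiating, $\wb^{[N]}$ satisfies \eqref{eq:NLF} up to a \emph{defect} $\delta(t)$: by construction of the $\Wb_n$ through \eqref{eq:induc}, all terms of order $c^{0},c^{-2},\ldots,c^{-2N}$ cancel, and what remains are (i) the tail of the asymptotic expansion of $c\nab$ and $c\nab^{-1}$ applied to the $\Wb_n$ (controlled by the explicit remainder estimate after \eqref{eq:alphn}, giving $\O(c^{-2N-2})$ in $H^s$ thanks to the $H^{s+2N+4}$-regularity of the coefficients), (ii) the tail of the Taylor/polynomial expansion of $\Fb$ around $\Wb_0$ (a finite polynomial sum, each term $\O(c^{-2N-2})$ since $f$ is polynomial), and (iii) the mismatch in initial conditions, which is $\O(c^{-2N-2})$ by \eqref{eq:psin}. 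Hence $i\partial_t\rb = -c\nab\rb + c\nab^{-1}(\Fb(\wb) - \Fb(\wb^{[N]})) + \delta(t)$ with $\Norm{\delta(t)}{H^s}\leq C_N c^{-2N-2}$ and $\Norm{\rb(0)}{H^s}\leq C_N c^{-2N-2}$. In mild form, using that $e^{itc\nab}$ is an $H^s$-isometry and $c\nab^{-1}$ is uniformly bounded, together with the Lipschitz estimate \eqref{eq:lips} and the uniform-in-$c$ a priori bounds $\Norm{\wb(t)}{H^s}\leq B_s$ (Proposition \ref{prop21}) and $\Norm{\wb^{[N]}(t)}{H^s}\leq B_s'$ (from the construction), one gets $\Norm{\rb(t)}{H^s}\leq C_N c^{-2N-2} + C\int_0^t \Norm{\rb(\s)}{H^s}\,\dd\s$ on $(0,\min(T,T_s))$, and Gronwall closes the estimate.

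The main obstacle is the bookkeeping of regularity indices through the cascade: one must verify that with data in $H^{s+2N+4}$ every $\Wb_n$ genuinely lands in $\Hc_T^{s+2(N-n)}$ — in particular that the two-derivative loss at each of (a) the $\frac12\Delta\Wb_{n-1}$ term, (b) the $\Rb_n$ / $\Sb_n$ polynomial-in-derivatives terms, and (c) the remainders in the operator expansions of $c\nab$, $c\nab^{-1}$ all fit within the single budget $2N+4$ and do not compound — and correspondingly that the defect $\delta(t)$ is controlled in $H^s$ (not a weaker norm) by $c^{-2N-2}$. Everything else is a by-now-standard fixed-point/Gronwall argument built on the isometry of the linear flow, the uniform boundedness of $c\nab^{-1}$, and the bilinear estimate \eqref{eq:bil1}.
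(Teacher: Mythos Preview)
Your proposal is correct and follows essentially the same route as the paper: inductive construction of the $\Wb_n$ via the homological lemma, with the free $e^{i\theta}$-component at each level determined by the solvability condition at the next level (yielding the nonlinear Schr\"odinger equation \eqref{eq:nlsw0} for $\wb_0$ and linear inhomogeneous Schr\"odinger equations thereafter), followed by a Gronwall argument on the defect of the truncated expansion. The only minor slip is the regularity you assign to $\wb_0$: the paper places it in $H^{s+2N+4}$ rather than $H^{s+2N+2}$, which is needed so that the operator-expansion remainders (the bound displayed after \eqref{eq:alphn}) are controlled in $H^s$ at the last step; otherwise your regularity bookkeeping and closing argument match the paper's.
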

\begin{proof}
Let us consider the recursion \eqref{eq:induc} together with the initial condition \eqref{eq:psibn} where the $\psib_n$ are given by \eqref{eq:psin}. For $n = 0$, we obtain the equation 

$$
i (\partial_\theta + 1) \Wb_0(t,\theta,x) = 0,\quad \mbox{and}\quad\Wb(0,0,x) = \psib_0(x)
$$
which implies, according to the previous lemma, that 
\begin{equation}
\label{eq:W0}
\Wb_0(t,\theta,x) = e^{i\theta}\wb_0^{(1)}(t,x), \quad \mbox{and}\quad\wb_0^{(1)}(0,x) = \psib_0(x)
\end{equation}
that is we have determined $\wb_0^{(a)} = 0$ for $a \neq 1$ (and set $p_0 = 1$). 

For $ n = 1$, the equation \eqref{eq:induc0} is written 
\begin{equation}\label{eq:W1}
i (\partial_\theta + 1) \Wb_1(t,\theta,x)  =  (- i \partial_t  + \textstyle \frac12\Delta) \Wb_{0}  +  \Fb(\Wb_{0}). 
\end{equation}
According to the result of given in the previous lemma (see \eqref{eq:homo}), this equation will be solvable if the right-hand side satisfies 
$$
\int_{\T} (- i \partial_t  + \textstyle \frac12\Delta) e^{-i \theta} \Wb_{0}(t,\theta,x)  +  e^{-i \theta}\Fb(\Wb_{0}(t,\theta,x)) \dd \theta = 0. 
$$
In view of \eqref{eq:W0}, this equation can be written as
$$
( i \partial_t  - \textstyle \frac12\Delta)  \wb_{0}^{(1)}(t,x) = \displaystyle \frac{1}{2\pi} \int_{\T} e^{-i \theta}\Fb(e^{i\theta}\wb_{0}^{(1)}(t,x))  \dd \theta,
$$
which gives the equation \eqref{eq:nlsw0} with the boundary condition \eqref{eq:psibn}. Now let $T$ be such that $\wb_0(t,x)$ is a solution to the previous equation with initial value $\wb(0,x) = \psib_0(x)$ in the space $\mathcal{C}^1((0,T), H^{s + 2N +4} \times H^{s +2N +4})$. Then we can set $\wb_{0}^{(1)}(t,x) = \wb_0(t,x)$ and solve for the functions $\wb_1^{(a)}$, $a \neq 1$ according to the previous lemma.  Note that $p_1$ is given by the degree of the polynomial in the nonlinear right-hand side. 

Now assume that $n \geq 1$ is given, and that $\wb_k^{(a)}(t,\theta,x)$, $a \leq p_k$ are known functions in $\mathcal{C}^1((0,T, H^{s + 2(N - k) +4}\times H^{s + 2(N-k) +4})$ for $k \leq n-2$  and $p_k < \infty$, and  that $\wb_{n-1}^{(a)}(t,\theta,x)$ is determined for $a \neq 1$. 

Consider Equation \eqref{eq:induc}. Using \eqref{eq:derivW} and the induction hypothesis, the right-hand side of this equation belongs to $\mathcal{C}^1((0,T, H^{s + 2(N - n)+2}\times H^{s + 2(N-n) +2})$, and is a polynomial in $e^{i\theta}$ of degree $p_n \geq p_{n-1}$, because the nonlinearity is a polynomial. To be solvable, the right-hand side of \eqref{eq:induc} has to be orthogonal to $e^{-i \theta}$ in $L^2(\T)$. 
By induction hypothesis, only the term $w_{n-1}^{(1)}$ has to determined. In view of \eqref{eq:induc}, we thus see that $w_{n-1}^{(1)}$ has to satisfy a linear equation of the form 
\begin{equation}
\label{eq:eqlin}
(- i \partial_t  + \textstyle \frac12\Delta)  \wb_{n-1}^{(1)}(t,x) = \mb_{n-1}(t,x) \cdot \wb_{n-1}^{(1)} + \rb_{n-1}(t,x), 
\end{equation}
where $\mb_{n-1}(t,x)$ is a linear multiplicative operator with coefficients in the space $\mathcal{C}^{1}((0,T), H^{s + 4N+2})$, and where 
$\rb(t,x) \in \mathcal{C}^1((0,T, H^{s + 2(N - n)+2}\times H^{s + 2(N-n)+2})$. The initial condition associated with this equation is written, according to \eqref{eq:psibn}, as
$$
\wb_{n-1}^{(1)}(0,x) = \psib_n(x) + \sum_{|a| \leq p_{n-1}, \, \, a \neq 1} \wb_{n-1}^{(a)}(0,x), 
$$
where $\psib_n(x)$ is given by \eqref{eq:psin} in terms of $\varphi$ and $\gamma$, and where the other terms in the right-hand side are known by induction hypothesis. 

As the equation \eqref{eq:eqlin} in linear with coefficients bounded on the time interval $(0,T)$; 
we deduce that $\wb_{n-1}^{(1)}(t,x)$ is well defined in $\mathcal{C}^1((0,T, H^{s + 2(N - n)+2}\times H^{s + 2(N-n)+2})$.  Using the previous Lemma, we can then solve for the terms $\wb_{n}^{(a)}$ for $a \neq 1$ in $\mathcal{C}^1((0,T, H^{s + 2(N - n)+2}\times H^{s + 2(N-n)+2})$.  

To prove the asymptotic expansion, we note that by construction the term 
$$
\wb^{N}(t,x) :=  \sum_{n = 0}^{N}  c^{-2n} \Wb_n (t,c^{-2}t, x)
$$
belong to $\mathcal{C}^1((0,T, H^{s +2}\times H^{s +2})$ and
satisfies the equation \eqref{eq:NLF} up to an error or order $c^{-2(N+1)}$, that is 
\begin{multline}
\label{eq:approx21}
\forall\, t \in (0,T),\\ 
\begin{array}{rcl}
i \partial_t \wb^N(t,x) &=& - c \nab \wb^N(t,x) + c \nab^{-1} \Fb(\wb^N(t,x)) + c^{-2N -2}\rb_c^N(t,x),\\[1ex]
\wb^N(0,x) &=& \psib(x) + c^{-2N - 2} \psib_c^N
\end{array}
\end{multline}
with
$$
\rb^N_c(t,x) \in \mathcal{C}^1((0,T, H^{s}\times H^{s}), 
\quad \mbox{and}\quad
\psib^N_c(x) \in H^s \times H^s, 
$$
satisfying the following uniform bounds: for a given $c_0 >0$, 
 there exists a constant $C_N$ such that 
$$
\forall\, c > c_0,\quad \forall t \in (0,T), \quad \Norm{\wb^N(t,x) }{H^s} + \Norm{\rb_c^N(t,x)}{H^s} + \Norm{\psib_c^N(x)}{H^s} \leq C_N. 
$$
Using Proposition \ref{prop21}, we can assume that the exact solution $\wb(t)$ is also uniformly bounded (with respect to $c$) on $(0,T)$ in $H^s$. We conclude by using \eqref{eq:approx21} and standard comparison arguments based on the Gronwall Lemma on $(0,T)$ and in $H^s$ with $s > d/2$. 
\end{proof}

%%%%%%%%%%%%
\section{First terms}\label{sec:firstt}
%%%%%%%%%%%%

The previous result implies the existence of an asymptotic expansion of $u(t,x)$ and $v(t,x)$ solutions of \eqref{eq:NLSc}-\eqref{eq:BCc} up to any arbitrary order, provided the initial condition is smooth. Note that we will use the following notations: for all $n \geq 0$, $\Wb_n = (\Us_n,\Vs_n)^T$ with, according to the decomposition \eqref{eq:Wbn}
\begin{equation}
\label{eq:devU}
\Us_n =  \sum_{|a| \leq p_n} e^{ia \theta} u_n^{(a)}(t,x),\quad \mbox{and}\quad 
\Vs_n =  \sum_{|a| \leq p_n} e^{ia \theta} v_n^{(a)}(t,x). 
\end{equation}
This implies an asymptotic expansion of the solution $u(t)$, $v(t)$ and $z(t)$ of the form 
$$
u(t,x) = \sum_{n \geq 0} c^{-2n} u_n(t,c^2t,x) , \quad 
v(t,x) = \sum_{n \geq 0} c^{-2n} v_n(t,c^2t,x) , 
$$
and
$$
z(t,x) = \sum_{n \geq 0} c^{-2n} z_n(t,c^2t,x), 
$$
with the relation $z_n = \frac12 (u_n + \overline{v_n})$. Note that here, the equality signs must be understood in the sense of asymptotic expansion. Remark also that in the case where $\varphi$ and $\gamma$ are real, that is when the solution $z(t)$ is real, then we have for all $n$, $u_n = v_n$. 
Using the result of Theorem \ref{thm:main}, $z_0(t,x)$ is an approximation of the solution $z(t,x)$ over the time interval $(0,T)$ with an error order $\mathcal{O}(c^{-2})$, while 
\begin{equation}
\label{eq:devz}
z_0(t,x) + \textstyle\frac{1}{c^2} z_1(t,x)
\end{equation}
approximates the exact solution $z(t,x)$ over the time interval $(0,T)$ up to terms of order $c^{-4}$.
The goal of this section is to express the two first terms $z_0$ and $z_1$.  For simplicity we consider the one dimensional case and assume the initial values $\varphi, \gamma$ to be sufficiently smooth so that Theorem \ref{thm:main} can be applied. Note that a generalization to higher dimensions follows the line of argumentation. We will use these computations to derive our numerical schemes in Section \ref{sec:nume}. For reasons of ease we will focus on the linear case $f(z) = \lambda z$, see Section \ref{ex:lin}, and a cubic-nonlinearity $f(z) = \lambda|z|^2 z$, see Section \ref{ex:cub}. Nevertheless we state the equations for $\Us_1$ and $\Vs_1$ in the general case $f(z) = \lambda|z|^{2p} z$,  $p\in\N$. 

The first  term $z_0$ approximating the exact solution of Klein-Gordon equation \eqref{eq:kgr} up to terms of order $c^{-2}$ is easily derived from Theorem \ref{thm:main} using \eqref{eq:zuv} and \eqref{eq:rlb}. More precisely, we have 
$$
z_{0}(t,x) = \textstyle \frac{1}{2} e^{itc^2} u_0(t,x) + \frac12 e^{-itc^2} \overline{v_0}(t,x),
$$
where $u_0, v_0$ solve the nonlinear Schr\"odinger system \eqref{eq:nlsw0}. 

In order to explicit the term $z_1(t,x)$,  we expand the non-linearity $f(\frac12 (\Us +\overline{\Vs}))$ in a Taylor-series, up to terms of order $c^{-2}$. With the notation $\Zs_k = \frac12 (\Us_k +\overline{\Vs_k})$ and keeping in mind that $f(z) = f(z,\overline{z}) = \lambda \vert z\vert^{2p} z = \lambda z^{p+1} \overline{z}^p$, we obtain
\begin{equation*}
\begin{aligned}
f(\textstyle\frac12 (\Us +\overline{\Vs})) &= f(\mathsf{Z}_0+\textstyle\frac{1}{c^2} \mathsf{Z}_1)+\mathcal{O}(c^{-4}) \\&= f(\mathsf{Z}_0) + \textstyle\frac{1}{c^2}\big(\partial_z f(\mathsf{Z}_0,\overline{\mathsf{Z}_0}) \mathsf{Z}_1 + \partial_{\overline{z}} f(\mathsf{Z}_0,\overline{\mathsf{Z}_0})\overline{\mathsf{Z}_1}\big) +\mathcal{O}(c^{-4})\\
&= f(\mathsf{Z}_0) + \textstyle\frac{\lambda}{c^2} \left[ (p+1) \vert \mathsf{Z}_0\vert^{2p}\mathsf{Z}_1+ p  \vert \mathsf{Z}_0\vert^{2p-2}(\mathsf{Z}_0)^2 \overline{\mathsf{Z}_1}\right]+\mathcal{O}(c^{-4}). 
\end{aligned}
\end{equation*}
Furthermore, in the above notation we calculate that 
\begin{equation*}
\begin{aligned}
& \Delta f = \partial_{zz} f (\partial_x \mathsf{Z})^2 + 2\partial_{z\overline{z}}f(\partial_x \mathsf{Z})(\partial_x \overline{\mathsf{Z}})+\partial_z f (\partial_{xx} \mathsf{Z}) +\partial_{\overline{z}\overline{z}} f (\partial_x \overline{\mathsf{Z}})^2 + \partial_{\overline{z}} f (\partial_{xx} \overline{\mathsf{Z}}), 
\end{aligned}
\end{equation*}
where in this formula $f$ is evaluated in $(\mathsf{Z},\overline{\mathsf{Z}})$. 

Thus, we obtain by \eqref{eq:induc} with $n=2$ (using \eqref{eq:F1} and the above calculations) that the homological equation in terms of $\mathsf{U}_2$ is written 
\begin{equation}\label{eq:U2}
\begin{aligned}
(i \partial_\theta  +1)\mathsf{U}_2 =& -i\partial_t \mathsf{U}_1 + \textstyle \frac12 \Delta \mathsf{U}_1 + \textstyle \frac18 \Delta ^2 \mathsf{U}_0 \\&+\lambda (p+1) \vert \textstyle \frac12 (\mathsf{U}_0 + \overline{\mathsf{V}_0}) \vert^{2p} \textstyle \frac12(\mathsf{U}_1+\overline{\mathsf{V}_1}) \\
&+ \lambda p\vert  \textstyle \frac12(\mathsf{U}_0 + \overline{\mathsf{V}_0}) \vert^{2p-2}( \textstyle \frac12(\mathsf{U}_0+\overline{\mathsf{V}_0}))^{2}\textstyle \frac12(\mathsf{V}_1+\overline{\mathsf{U}_1})
\\&+\lambda (p+1) \vert \textstyle \frac12 (\mathsf{U}_0 + \overline{\mathsf{V}_0}) \vert^{2p}\textstyle \frac14\Delta (\mathsf{U}_0+\overline{\mathsf{V}_0})\\
& + \lambda p\vert  \textstyle \frac12(\mathsf{U}_0 + \overline{\mathsf{V}_0}) \vert^{2p-2}( \textstyle \frac12(\mathsf{U}_0+\overline{\mathsf{V}_0}))^{2} \textstyle \frac14\Delta (\mathsf{V}_0+\overline{\mathsf{U}_0})
\\
&+\lambda \textstyle \frac12p(p+1)\vert \textstyle \frac12 (\mathsf{U}_0 + \overline{\mathsf{V}_0}) \vert^{2p-2}\frac12(\overline{\mathsf{U}_0}+\mathsf{V}_0)\big(\partial_x  \frac12 (\mathsf{U}_0 + \overline{\mathsf{V}_0}) )^2\\
&+\lambda\textstyle p(p+1)\vert\textstyle  \frac12 (\mathsf{U}_0+ \overline{\mathsf{V}_0}) \vert^{2p-2}\textstyle  \frac12 (\mathsf{U}_0 + \overline{\mathsf{V}_0}) \vert\partial_x  \frac12 (\mathsf{U}_0 + \overline{\mathsf{V}_0})\vert^2\\
&+\lambda \textstyle \frac12p(p-1)\vert\textstyle \frac12 (\mathsf{U}_0 + \overline{\mathsf{V}_0}) \vert^{2p-4}( \frac12 (\mathsf{U}_0 + \overline{\mathsf{V}_0}) )^3\big(\partial_x \frac12 (\overline{\mathsf{U}_0}+\mathsf{V}_0)\big),
\end{aligned}
\end{equation}
and a similar equation for $\Vs_2$ with $\Us_k$ replaced by $\Vs_k$, $k=0,1,2$, and vice versa.
%%%%%%%%%%%%
\subsection{Formula for the first terms in the linear case}\label{ex:lin}
%%%%%%%%%%%%

In the linear case we have $f(z) =\lambda z$. In this case we obtain the following limit equation for $u_0$ and $v_0$
\begin{equation}\label{eq:u0lin}
\begin{aligned}
\textstyle(- i\partial_t+\frac12 \Delta + \frac{\lambda}{2} )u_0 &= 0,\qquad u_0(0) = \varphi - i \gamma,\\
\textstyle(- i\partial_t+\frac12 \Delta + \frac{\lambda}{2}) v_0 &= 0,\qquad v_0(0) = \overline{\varphi+i\gamma},
\end{aligned}
\end{equation}
see \eqref{eq:nlsw0}. For $\Us_1$ we obtain by \eqref{eq:W1}
\begin{equation*}
(i\partial_\theta +1)\Us_1 = (-i\partial_t +\textstyle \frac12\Delta)u_0\mathrm{e}^{i\theta} + \frac12\lambda(u_0\mathrm{e}^{i\theta}+\overline{v_0}\mathrm{e}^{-i\theta}).
\end{equation*}
Thus, plugging the Fourier expansion \eqref{eq:devU} of $\Us_1$ in $\theta$, we obtain with the orthogonality relation \eqref{eq:u0lin} for $u_0$ that
\begin{equation*}
\sum_{a\in\Z} (1-a) u_1^{(a)}\mathrm{e}^{ia\theta} = \textstyle \frac{1}{2}\lambda\overline{v_0}\mathrm{e}^{-i\theta}.
\end{equation*}
Thus, $u_1^{(a)} = 0$ for $a\neq \pm 1$ and $u_{1}^{(-1)} = \textstyle\frac14 \lambda \overline{v_0}$, and we set $\xi_1 := u_1^{(1)}$, so that we can write
\begin{equation}
\Us_1(t,\theta,x)= \textstyle \frac14\lambda \overline{v_0}(t,x) \mathrm{e}^{-i\theta}+\xi_1(t,x)\mathrm{e}^{i\theta}, 
\end{equation}
Note that similarly $\Vs_1(t,\theta,x)= \frac14\lambda \overline{u_0}(t,x) \mathrm{e}^{-i\theta}+\eta_1(t,x)\mathrm{e}^{i\theta}$ with $\eta_1 := v_1^{(1)}$. 
In order to determine $\xi_1$ and $\eta_1$  we analyze \eqref{eq:U2} in the linear case, i.e. $p=0$, which reduces to
\begin{equation*}
(i\partial_\theta +1) \Us_2 = -i \partial_t \Us_1 +\textstyle \frac12 \Delta \Us_1 + \frac18 \Delta^2 \Us_0 +\lambda \frac12 (\Us_1+\overline{\Vs_1})+\frac14\lambda \Delta (\Us_0+\overline{\Vs_0}).
\end{equation*}
To be solvable, the right-hand side of the equation needs to be orthogonal to $\mathrm{e}^{-i\theta}$ which yields the following equation for $\xi_1(t,x)$:
\begin{equation*}
\textstyle(- i\partial_t+\frac12 \Delta + \frac{\lambda}{2})\xi_1 = \textstyle -\frac18\Delta^2 u_0 -\frac18\lambda^2 u_0 - \frac14 \lambda \Delta u_0.
\end{equation*}
Using twice the Schr\"odinger equation \eqref{eq:u0lin} for $u_0$, the above equation can be rewritten as
\begin{equation*}
\textstyle(- i\partial_t+\frac12 \Delta + \frac{\lambda}{2})u_1 = \textstyle \frac12 \partial_{tt} u_0.
\end{equation*}
The same equation holds true for $\eta_1$ with $\xi_1$ and $u_0$ replaced by $\eta_1$ and $v_0$, respectively. The initial values $\xi_1(0)$ and $\eta_1(0)$ have to be chosen such that $\Wb_1(0,0,x) = \boldsymbol{\psi}_1(x)$, see \eqref{eq:psibn}. As $\Us_1(0) = \frac{1}{4} \lambda\overline{v_0}(0)+\xi_1(0)$ and $\Vs_1(0) =  \frac{1}{4}\lambda \overline{u_0}(0)+\eta_1(0)$ we thus obtain by \eqref{eq:psin} the following initial value problems for $\xi_1$ and $\eta_1$
\begin{equation}\label{eq:u1lin}
\begin{aligned}
& \left(\textstyle- i\partial_t+\frac12 \Delta + \frac{\lambda}{2}\right) \xi_1 = \textstyle \frac12 \partial_{tt} u_0,\, &\xi_1(0) =   \textstyle \frac14 (\Delta u_0(0)-(\Delta+\lambda) \overline{v_0}(0)),\\
& \left(\textstyle- i\partial_t+\frac12 \Delta + \frac{\lambda}{2}\right) \eta_1 = \textstyle \frac12 \partial_{tt} v_0,\, & \eta_1(0) =   \textstyle \frac14 (\Delta v_0(0)-(\Delta+\lambda) \overline{u_0}(0)).
\end{aligned}
\end{equation}
We summarize our findings in the following corollary.
\begin{corollary}[First terms in the linear case]\label{cor:lin}
In the linear case, i.e. $f(z) = \lambda z$, the first and second  terms in the asymptotic expansion  \eqref{eq:devz} read
\begin{equation*}
z_0(t,x) = \textstyle \frac{1}{2} e^{itc^2} u_0(t,x) + \frac12 e^{-itc^2} \overline{v_0}(t,x)
\end{equation*}
and
\begin{equation}\label{eq:z2lin}
z_1(t,x) =\textstyle \frac\lambda{8} \big(u_0(t,x)\mathrm{e}^{ic^2t}+\overline{v_0}(t,x)\mathrm{e}^{-ic^2t}\big) + \frac{1}{2}\big(\xi_1(t,x)\mathrm{e}^{ic^2t}+\overline{\eta_1}(t,x)\mathrm{e}^{-ic^2t}\big),
\end{equation}
where $u_0$, $v_0$ and $\xi_1$, $\eta_1$ solve \eqref{eq:u0lin} and \eqref{eq:u1lin}, respectively. Then the functions $z_0(t,x)$ and 
$z_0(t,x) + \frac{1}{c^2}z_1(t,x)$ approximate the exact solution of \eqref{eq:kgr} in $H^s$ up to terms of order $c^{-2}$ and $c^{-4}$ for initial values in $H^{s+4}$ and $H^{s+6}$, respectively.
\end{corollary}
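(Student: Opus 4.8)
The plan is to obtain Corollary~\ref{cor:lin} as the specialization of Theorem~\ref{thm:main} to $N=0$ and $N=1$ with $f(z)=\lambda z$, combined with the explicit computations for $\Wb_0$, $\Wb_1$ and the solvability condition for $\Wb_2$ that were already carried out in Section~\ref{sec:firstt}. I do not expect a genuinely new analytic ingredient here: everything needed is contained in Theorem~\ref{thm:main} (existence of the $\Wb_n$ in $\Hc_T^{s+2(N-n)}$ and the error bound \eqref{eq:asympt}) together with the formal cascade above, so the work is essentially the bookkeeping of Sobolev indices and the assembling of formulas.

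First I would apply Theorem~\ref{thm:main} with $N=0$: this requires $\varphi,\gamma\in H^{s+4}$, produces $\Wb_0=\wb_0 e^{i\theta}$ with $\wb_0=(u_0,v_0)^T$ solving \eqref{eq:nlsw0} — which for $f(z)=\lambda z$ is exactly the decoupled system \eqref{eq:u0lin} — and gives, via \eqref{eq:asympt} and the boundedness of $\wb\mapsto z=\frac12(u+\overline v)$ on $H^s$, that $z_0(t,x)=\frac12 e^{itc^2}u_0(t,x)+\frac12 e^{-itc^2}\overline{v_0}(t,x)$ approximates the solution $z(t,x)$ of \eqref{eq:kgr} in $H^s$ up to $\mathcal{O}(c^{-2})$ on $(0,T)$. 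For the second term I would take $N=1$ in Theorem~\ref{thm:main}, which requires $\varphi,\gamma\in H^{s+6}$, and use the already-derived form $\Us_1=\frac14\lambda\overline{v_0}\,e^{-i\theta}+\xi_1 e^{i\theta}$ together with the symmetric $\Vs_1=\frac14\lambda\overline{u_0}\,e^{-i\theta}+\eta_1 e^{i\theta}$; setting $\theta=c^2t$ in $z_1=\frac12\bigl(u_1(t,c^2t,x)+\overline{v_1(t,c^2t,x)}\bigr)$ and conjugating the $v_1$ contribution yields precisely \eqref{eq:z2lin}.

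It then remains to identify the equations \eqref{eq:u1lin} governing $\xi_1$ and $\eta_1$. They come from the solvability condition for the homological equation \eqref{eq:U2} specialized to $p=0$, i.e.\ from requiring its right-hand side to be orthogonal to $e^{-i\theta}$ in $L^2(\T)$; this gives $(-i\partial_t+\frac12\Delta+\frac\lambda2)\xi_1=-\frac18\Delta^2u_0-\frac18\lambda^2u_0-\frac14\lambda\Delta u_0$, and differentiating \eqref{eq:u0lin} twice in $t$ (so that $\partial_{tt}u_0=-\frac14(\Delta+\lambda)^2u_0$) rewrites the right-hand side as $\frac12\partial_{tt}u_0$. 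The initial condition follows from $\Wb_1(0,0,x)=\psib_1(x)=-\frac i2\Delta(\gamma,\overline\gamma)^T$ in \eqref{eq:psin} and $\Us_1(0,0,x)=\frac14\lambda\overline{v_0}(0)+\xi_1(0)$, hence $\xi_1(0)=-\frac i2\Delta\gamma-\frac14\lambda\overline{v_0}(0)$, which using $u_0(0)=\varphi-i\gamma$ and $\overline{v_0}(0)=\varphi+i\gamma$ rearranges into $\frac14\bigl(\Delta u_0(0)-(\Delta+\lambda)\overline{v_0}(0)\bigr)$; the statement for $\eta_1$ is obtained by the $u\leftrightarrow v$ symmetry of the construction.

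The two approximation claims are then just \eqref{eq:asympt} for $N=0$ and $N=1$ transported through $z=\frac12(u+\overline v)$. The only points requiring care — and the closest thing to an obstacle — are checking that the chain of formal manipulations of Section~\ref{sec:firstt} (the Taylor expansion of $f$, the expression for $\Delta f$, the double substitution of the Schr\"odinger equation) is rigorous, which is guaranteed by the smoothness hypotheses $\varphi,\gamma\in H^{s+4}$, resp.\ $H^{s+6}$, and the algebra property \eqref{eq:bil1} for $s>d/2$, and the consistent tracking of the $2(N-n)$ derivative loss for $\Wb_n$ supplied by Theorem~\ref{thm:main}.
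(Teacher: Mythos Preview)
Your proposal is correct and follows essentially the same approach as the paper: the corollary is presented there as a summary of the computations of Section~\ref{ex:lin}, and your argument reproduces exactly those steps (specializing Theorem~\ref{thm:main} to $N=0,1$, reading off $\Wb_0$ and $\Wb_1$, extracting the solvability condition for $\Us_2$, rewriting the source via \eqref{eq:u0lin}, and matching initial data through \eqref{eq:psibn}). The only addition you make is to spell out the regularity bookkeeping $H^{s+4}$/$H^{s+6}$ and the passage from $\wb$ to $z$, which the paper leaves implicit.
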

Note that both corrections terms \eqref{eq:u0lin} and \eqref{eq:u1lin} can be solved exactly in Fourier. Thus, in the linear case we can state the correction terms $z_0$ and $z_1$ in an exact way. See Section \ref{sec:lin} for further details.

%%%%%%%%%%%%
\subsection{Formula for the first terms in the cubic case}\label{ex:cub}
%%%%%%%%%%%%

We consider now the case of cubic nonlinearity, i.e. $p = 1$ with the previous notations. 
Moreover, for simplicity we consider the case, where $u_0 \equiv v_0$, i.e. $z(0)$ and  $\partial_t z(0)  \in \R$. Thus, $f(\frac12(\Us_0 + \overline{\Vs_0})) =\textstyle \frac{\lambda}{16}\big (u_0\mathrm{e}^{i\theta}+\overline{u_0}\mathrm{e}^{-i\theta}\big)^3$ and the limit equation for $u_0$ reduces to
\begin{equation}\label{eq:u0rel} 
-i\partial_t u_0 +\textstyle\frac{1}{2}\Delta u_0 + \frac{3}{8} \lambda \vert u_0\vert^2 u_0 = 0,\qquad u_0(0) = \varphi - i \gamma,
\end{equation}
see \eqref{eq:nlsw0}. Plugging the Fourier expansion \eqref{eq:devU} of $\Us_1$ in $\theta$ into \eqref{eq:W1}, we obtain with the orthogonality relation \eqref{eq:u0rel}  for $u_0$ that
$$
\sum_{a\in\Z} (1-a)u_1^{(a)} \mathrm{e}^{ia\theta} =\textstyle\frac{\lambda}{8} \Big((u_0)^3\mathrm{e}^{3i\theta}+3\vert u_0\vert^2\overline{u_0}\mathrm{e}^{-i\theta} + (\overline{u_0})^3\mathrm{e}^{-3i\theta}\Big).
$$
Hence, $u_1^{(a)} = 0$ for $a\neq -3,-1,1,3$ and
$$
u_{1}^{(-3)} = \textstyle\frac{\lambda}{32}(\overline{u_0})^3,\quad u_{1}^{(-1)} = \frac{3\lambda}{16}\vert u_0\vert^2 \overline{u_0},\quad\mbox{and}\quad u_1^{(3)} = -\textstyle\frac{\lambda}{16} (u_0)^3. 
$$
Setting $\xi_1 := u_1^{(1)}$, we obtain 
\begin{equation}\label{eq:U1cub}
\Us_1= \textstyle \frac{\lambda}{8}\Big(- \frac12 (u_0)^3 \mathrm{e}^{3i\theta}+\frac32 \vert u_0\vert^2\overline{u_0} \mathrm{e}^{-i\theta}+\frac14 (\overline{u_0})^3\mathrm{e}^{-3i\theta}\Big) + \xi_1 \mathrm{e}^{i\theta} (= \Vs_1). 
\end{equation}
In order to determine the function $\xi_1$ we need to state the equation for $\Us_2 = \sum_{a\in\Z} u_2^{(a)} \mathrm{e}^{i a \theta}$,  which reduces in our cubic setting to
\begin{equation*}
\begin{aligned}
\sum_{a\in\Z} (1- & a)u_2^{(a)} \mathrm{e}^{ia\theta} =\sum_{a\in \Z} (-i\partial_t + \textstyle\frac12 \Delta) u_1^{(a)} \mathrm{e}^{ia\theta} +\textstyle\frac18\Delta^2 u_0 \mathrm{e}^{i\theta}\\& +\textstyle\frac{3}{16}\lambda\Big((u_0)^2\mathrm{e}^{2i\theta}+2\vert u_0\vert^2+(\overline{u_0})^2\mathrm{e}^{-2i\theta}\Big)(\Delta u_0\mathrm{e}^{i\theta}+\Delta \overline{u_0}\mathrm{e}^{-i\theta})\\&
+\textstyle \frac38 \lambda \Big ((\partial_x u_0)^2\mathrm{e}^{2i\theta}+2\vert \partial_x u_0\vert^2+(\partial_x \overline{u_0})^2\mathrm{e}^{-2i\theta}\Big)(u_0\mathrm{e}^{i\theta}+\overline{u_0}\mathrm{e}^{-i\theta})\\&
+\textstyle \frac38\lambda\Big((u_0)^2\mathrm{e}^{2i\theta}+2\vert u_0\vert^2+(\overline{u_0})^2\mathrm{e}^{-2i\theta}\Big)\sum_{a\in\Z} (u_1^{(a)} \mathrm{e}^{ia\theta}+ \overline{u_1^{(a)}}\mathrm{e}^{-ia\theta}),
\end{aligned}
\end{equation*}
see \eqref{eq:U2}. To be solvable, the right-hand side of the equation needs to be orthogonal to $\mathrm{e}^{-i\theta}$ which yields the following equation for $\xi_1(t,x) = u_1^{(1)}(t,x)$: 
\begin{equation*}
\begin{aligned}
& (i \partial_t - \textstyle \frac12 \Delta )\xi_1 -\lambda \textstyle \frac34 \vert u_0\vert^2 \xi_1 -\lambda \frac38 (u_0)^2\overline{\xi_1}= \textstyle \frac18 \Delta^2 u_0 +\lambda^2 \frac{51}{256}\vert u_0\vert^4 u_0 +\frac{3}{2}\Delta f(\frac12 u_0).
\end{aligned}
\end{equation*}
The initial value $\xi_1(0)$ needs to be chosen such that $\Us_1(0) =- \frac{i}{2}\Delta \gamma$, see \eqref{eq:psibn} as well as the definition \eqref{eq:psin}. Thus, by \eqref{eq:U1cub} we obtain that
\begin{equation*}
\xi_1(0) = \textstyle \frac{\lambda}{16}u_0(0)^3 - \frac{\lambda}{32}\overline{u_0}(0)^3-\frac{3\lambda}{16}\vert u_0(0)\vert^2 \overline{u_0}(0)+\frac14 \Delta (u_0(0)-\overline{u_0}(0)).
\end{equation*}
Using \eqref{eq:u0rel} we rewrite the equation for $u_1$ as
\begin{equation}\label{eq:u1rel}
\begin{aligned}
& (i \partial_t - \textstyle \frac12 \Delta )\xi_1 -\lambda \textstyle \frac34 \vert u_0\vert^2 \xi_1 -\lambda \frac38 (u_0)^2\overline{\xi_1}\\& \qquad\qquad\qquad\qquad\qquad= \textstyle \frac14 i \partial_t \Delta u_0 +\lambda^2 \frac{51}{256}\vert u_0\vert^4 u_0 +\textstyle\lambda \frac{3}{32}\Delta  (\vert u_0\vert^2 u_0),\\
&\xi_1(0) = \textstyle \frac{\lambda}{16}u_0(0)^3 - \frac{\lambda}{32}\overline{u_0}(0)^3-\frac{3\lambda}{16}\vert u_0(0)\vert^2 \overline{u_0}(0)+\frac14 \Delta (u_0(0)-\overline{u_0}(0)).
\end{aligned}
\end{equation}
Again we summarize our findings in the following corollary.
\begin{corollary}[First terms in the cubic case]\label{cor:cub}
For a cubic non-linearity, i.e. $f(z) = \lambda \vert z\vert^2 z$, and for  real initial values functions $z(0,x) = \varphi(x) \in \R$ and $\partial_t z(0,x) = c^2 \gamma(x) \in \R$, the first and second terms in the expansion \eqref{eq:devz} read
\begin{equation*}
z_0(t,x) = \textstyle \frac{1}{2} e^{itc^2} u_0(t,x) + \frac12 e^{-itc^2} \overline{u_0}(t,x),
\end{equation*}
and
\begin{equation}\label{eq:corr2}
\begin{aligned}
z_1(t,x)=& \textstyle \frac{3\lambda}{32}\vert u_0(t,x)\vert^2 (u_0(t,x)\mathrm{e}^{ic^2t}+\overline{u_0}(t,x)\mathrm{e}^{-ic^2t})\\&- \textstyle\frac{\lambda}{64}\big( u_0(t,x)^3\mathrm{e}^{3ic^2t}+\overline{u_0}(t,x)^3\mathrm{e}^{-3ic^2t}\big) \\&+  \textstyle \frac{1}{2} \big( \xi_1(t,x) \mathrm{e}^{ic^2t}+\overline{\xi_1}(t,x)\mathrm{e}^{-ic^2t}\big),
\end{aligned}
\end{equation}
where $u_0$ and $\xi_1$ solve \eqref{eq:u0rel} and \eqref{eq:u1rel}, respectively.  Then the functions $z_0(t,x)$ and 
$z_0(t,x) + \frac{1}{c^2}z_1(t,x)$ approximate the exact solution of \eqref{eq:kgr} in $H^s$ up to terms of order $c^{-2}$ and $c^{-4}$ for initial values in $H^{s+4}$ and $H^{s+6}$, respectively.
\end{corollary}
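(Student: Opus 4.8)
The plan is to read Corollary \ref{cor:cub} as the specialisation of Theorem \ref{thm:main}, applied once with $N=0$ and once with $N=1$, to the cubic nonlinearity $f(z)=\lambda|z|^2z$ and to real data, combined with the explicit identification of the first two profiles $\Wb_0,\Wb_1$ already carried out in this section. I would first record the structural simplifications coming from real data: since $\varphi,\gamma$ are real the solution $z$ is real, so $u=v$ for the variables \eqref{eq:uv}, and an induction along the cascade \eqref{eq:induc}--\eqref{eq:psibn} gives $\Us_n=\Vs_n$ for every $n$; hence $z_n=\tfrac12(\Us_n+\overline{\Us_n})$ and it suffices to follow the first components $\Us_0$ and $\Us_1$.

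Next I would invoke Theorem \ref{thm:main} with $N=1$ and $\varphi,\gamma\in H^{s+6}$ (with $s>d/2$): it produces, on a $c$-independent interval $(0,T)$, profiles $\Wb_0\in\Hc_T^{s+2}$ and $\Wb_1\in\Hc_T^{s}$ solving the cascade, together with the bound $\Norm{\wb(t)-\Wb_0-c^{-2}\Wb_1}{H^s}\le C_1c^{-4}$ for $t\le T$, where $\Wb_0,\Wb_1$ are evaluated along $\theta=c^2t$. Passing to $z=\tfrac12(u+\overline v)$, for which $\Norm{\cdot}{H^s}\le\Norm{\cdot}{H^s\times H^s}$, converts this into the announced $\mathcal O(c^{-4})$ estimate for $z_0+c^{-2}z_1$; running the same argument with $N=0$ and $\varphi,\gamma\in H^{s+4}$ gives the $\mathcal O(c^{-2})$ estimate for $z_0$. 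There remains only the identification of the profiles. For $\Wb_0$ the $n=0$ relation forces $\Wb_0=\wb_0e^{i\theta}$, and the solvability of the $n=1$ homological equation \eqref{eq:W1} (orthogonality of its right-hand side to $e^{-i\theta}$) is \eqref{eq:nlsw0}; evaluating the resonant component $\tfrac{1}{2\pi}\int_\T e^{-i\theta}\Fb(e^{i\theta}\wb_0)\,\dd\theta$ for $f(z)=\lambda|z|^2z$ and using that $\tfrac12(u_0e^{i\theta}+\overline{u_0}e^{-i\theta})$ is real (so $f$ reduces there to $\lambda(\cdot)^3$), one obtains after expanding the cube exactly \eqref{eq:u0rel}, a cubic NLS which is locally well posed in $H^{s+6}$ on $(0,T)$. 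For $\Wb_1$, expanding $\tfrac\lambda8(u_0e^{i\theta}+\overline{u_0}e^{-i\theta})^3$ and solving \eqref{eq:W1} mode by mode, as in the Lemma on the homological equation, determines $u_1^{(3)},u_1^{(-1)},u_1^{(-3)}$ as in \eqref{eq:U1cub} and leaves $\xi_1:=u_1^{(1)}$ free.

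The one genuinely computational point is the determination of $\xi_1$. Here I would use the $n=2$ instance of the cascade, namely the $\Us_2$-equation \eqref{eq:U2}, whose right-hand side combines $\Fb_1$ from \eqref{eq:F1} with the first Taylor correction $\partial_zf(\Zs_0)\Zs_1+\partial_{\overline z}f(\Zs_0)\overline{\Zs_1}$; inserting the already known $\theta$-expansions of $\Us_0$ and $\Us_1$ and imposing orthogonality to $e^{-i\theta}$ yields a linear Schr\"odinger equation for $\xi_1$ with forcing $\tfrac18\Delta^2u_0+\lambda^2\tfrac{51}{256}|u_0|^4u_0+\tfrac32\Delta f(\tfrac12u_0)$, and a single use of \eqref{eq:u0rel} to replace $\tfrac18\Delta^2u_0$ by $\tfrac14i\partial_t\Delta u_0$ plus a $\Delta(|u_0|^2u_0)$ term brings it, after combining with the remaining nonlinear terms, to the form \eqref{eq:u1rel}. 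Its initial datum is fixed by $\Wb_1(0,0,\cdot)=\psib_1=-\tfrac{i}{2}\Delta(\gamma,\overline{\gamma})^T$ (see \eqref{eq:psibn}, \eqref{eq:psin}) together with the values $u_1^{(a)}(0)$, $a\neq1$, read off from \eqref{eq:U1cub}; well-posedness of this linear equation on $(0,T)$ is covered by the estimate \eqref{eq:eqlin} used inside the proof of Theorem \ref{thm:main}. Finally, substituting $\theta=c^2t$ into $z_0=\tfrac12(\Us_0+\overline{\Us_0})$ and $z_1=\tfrac12(\Us_1+\overline{\Us_1})$ and collecting the phases $e^{\pm ic^2t}$ and $e^{\pm 3ic^2t}$ reproduces the displayed formulas for $z_0$ and $z_1$. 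I expect the main obstacle to be purely the bookkeeping in this last step: isolating the $e^{-i\theta}$ mode of \eqref{eq:U2} among the many products of the $\theta$-expansions of $\Us_0$ and $\Us_1$, and performing the substitutions via \eqref{eq:u0rel} with the correct numerical constants. The regularity count and the Gronwall comparison that close the error estimate are inherited directly from Theorem \ref{thm:main}.
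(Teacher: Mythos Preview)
Your proposal is correct and follows essentially the same approach as the paper: the corollary is presented there as a summary of the explicit computations carried out in Section~\ref{ex:cub} (deriving \eqref{eq:u0rel}, solving the homological equation for $\Us_1$ to obtain \eqref{eq:U1cub}, and extracting \eqref{eq:u1rel} from the $e^{-i\theta}$-mode of \eqref{eq:U2}), with the error bounds inherited directly from Theorem~\ref{thm:main} applied at $N=0$ and $N=1$. Your write-up is in fact slightly more careful than the paper in justifying $\Us_n=\Vs_n$ by induction and in tracking the regularity thresholds $H^{s+4}$ and $H^{s+6}$ against the hypothesis $\varphi,\gamma\in H^{s+2N+4}$ of the theorem.
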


%%%%%%%%%%%%
\section{Energy and charge density conservation}\label{sec:enq}
%%%%%%%%%%%%

%\begin{rem}
Analyzing the conserved quantities of the Klein-Gordon equation \eqref{eq:kgr}, we follow Dirac's approach via the ``hole" theory, i.e. antiparticles are related to negative energy eigenstates ($u$ particle with positive charge, $v$ antiparticle with negative charge). This allows us to overcome the problem of \emph{indefinite probability density} (i.e. if we would consider $z(t)$ as a probability amplitude, its probability density $\rho(z) =  \frac{i}{2c^2}\left(\overline{z}\partial_t z - z \partial_t\overline{z} \right)$ which is naturally defined by the continuity equation, would allow positive and negative values.) For further details on the quantization of the Klein-Gordon field we refer to \cite{BjoD65}.
%\end{rem}

In this interpretation ($\rho(z)$ being the \emph{charge}-density) the conserved physical quantities of the Klein-Gordon equation are the charge
\begin{equation}\label{eq:chargi}
Q(z) =  \frac{i}{2c^2}\int_\T z\partial_t \overline{z}-\overline{z} \partial_t z\,\mathrm{d}x
= \int_\T \mathrm{Re}\left( \frac{-i}{c^2} \partial_t z\overline{z}\right) \,\mathrm{d}x,
\end{equation}
as well as the energy
\begin{equation}\label{eq:eni}
E(z) = \int_\T \vert \textstyle \frac{1}{c} \partial_t z\vert^2 + \vert \nabla z\vert^2 + \vert c z\vert^2 - \textstyle \frac{1}{p+1}\vert z\vert^{2p+2}\,\mathrm{d}x. 
\end{equation}
Hence for all times $t$ where the solution exists and is smooth enough, 
$$
Q(z(t)) = Q(z(0)) = \int_\T \mathrm{Im}(\overline{\varphi}\gamma)\,\mathrm{d}x, 
$$
and
$$
E(z(t)) = E(z(0)) = c^2\big( \Vert \varphi \Vert_{L^2}^2 + \Vert \gamma \Vert_{L^2}^2\big)+ \int_\T \vert \nabla \varphi \vert^2 - \textstyle \frac{1}{p+1}\vert \varphi\vert^{2p+2}\,\mathrm{d}x. 
$$

Using the result given by Theorem \ref{thm:main}, and under smoothness assumptions on $\varphi$ and $\gamma$, we can easily prove that the charge and energy admit asymptotic expansion with respect to $c^{-2}$. In this section, we give the expression of the first terms of these expansions. We commence with the asymptotic expansion of the charge $Q(z)$, i.e. 
$$
 Q(z) = Q_0(\Us_0,\Vs_0) + \sum_{k\geq 1} c^{-2k} Q_k (\Us_0,\ldots,\Us_{k},\Vs_0,\ldots,\Vs_{k}).
$$
Using \eqref{eq:uv} we can easily (as we are working on the torus $\T$ and thus integration by parts yields no boundary-terms) rewrite the charge as
\begin{equation}
\label{eq:quqv}
\begin{aligned}
 Q(z) &= \frac14 \int_\T\mathrm{Re}\left( \big(\frac{\nab}{c} (u-\overline{v})\big)(\overline{u}+v)\right)\,\mathrm{d}x\\
& = \frac14  \int_\T \vert u\vert^2 - \vert v\vert^2 + \frac{1}{2c^2}( \vert \nabla u\vert^2 - \vert \nabla v\vert^2) \,\mathrm{d}x+ \mathcal{O}(c^{-4}).
\end{aligned}
\end{equation}
Thus, using the asymptotic expansion given in Theorem \ref{thm:main}, we obtain
\begin{equation}\label{eq:Q0}
Q_0(\Us_0,\Vs_0) = \frac{1}{4}\left(\Vert u_0\Vert_{L^2}^2 -\Vert v_0\Vert_{L^2}^2\right),
\end{equation}
where $u_0, v_0$ solve the nonlinear Schr\"odinger system \eqref{eq:nlsw0} and hence in particular their $L^2$ norms are conserved. Thus, the first  term of the charge reads
\begin{equation}\label{eq:Q0}
Q_0(\Us_0(t),\Vs_0(t)) = \frac{1}{4}\left(\Vert u_0(0)\Vert_{L^2}^2 -\Vert v_0(0)\Vert^2_{L^2}\right).
\end{equation}
Using the relation $z = \textstyle \frac12 (u+\overline{v})$ as well as $\partial_t z = \textstyle \frac12 i c \nab (u-\overline{v})$, see \eqref{eq:NLSc}, we obtain
$$
E(z) = \frac12 \int_\T  c^2 (\vert u\vert^2 + \vert v\vert^2) + \vert \nabla u\vert^2 + \vert \nabla v\vert^2 - \textstyle \frac{2}{p+1} \vert \textstyle \frac12(u+\overline{v})\vert^{2p+2}\,\mathrm{d}x +  \mathcal{O}(c^{-2}).
$$
Hence similarly to $Q(z)$, we obtain an asymptotic expansion of the form 
$$
E(z) = c^2 E_{-2}(\Us_0,\Vs_0) + \sum_{k\geq 0} c^{-2k} E_k (\Us_0,\ldots,\Us_{k},\Vs_0,\ldots,\Vs_{k}), 
$$
with 
$$
E_{-2}(\Us_0,\Vs_0) = \frac{1}{2}\left(\Vert u_0\Vert_{L^2}^2 + \Vert v_0\Vert_{L^2}^2\right).
$$

In order to analyze the asymptotic behavior of the energy $E(z)$, we first have to subtract the relativistic rest-energy $c^2Q_{\text{mass}}$ (see below) as from the energetic point of view the non-relativistic limit regime means that $\Ec := E - c^2Q_{\text{mass}}  $ is small. 
According to \eqref{eq:quqv}, we can write 
$$
Q(z) = Q^u(u) + Q^v(v),
$$
where in the sense of asymptotic expansion (see \eqref{eq:alphn}), 
$$
Q^u (u) = \frac14  \int_\T \vert u\vert^2+ \frac{1}{2c^2} \vert \nabla u\vert^2 + \sum_{j\geq 2} \alpha_j c^{-2j} \vert \nabla^{j} u\vert \,\mathrm{d}x,
$$
and where $Q^v(v)$ denotes the part of $Q(z)$ which contains the antiparticle component, i.e.
$$
Q^v(v) = Q^u (-v).
$$
With this notation, we define the rest energy $c^2Q_{\text{mass}}(z) = 2 c^2(Q^u(u) - Q^v(v))$. 
Using these notations, we obtain an asymptotic expansion of the form
\begin{equation}\label{eq:asympE}
\begin{aligned}
\Ec(z) &:= E(z) - 2c^2 (Q^u-Q^v)  \\
&\; = \mathcal{E}_0(\Us_0,\Vs_0) + \sum_{k\geq 1} c^{-2k} \Ec_k (\Us_0,\ldots,\Us_{k},\Vs_0,\ldots,\Vs_{k}).
\end{aligned}
\end{equation}
%Plugging the asymptotic expansions for $u$ and $v$ into the above representation of the charge we obtain the first correction term for $Q(z)$, namely
%\begin{equation*}
%Q^0(u_0,v_0) =  \frac14 (\Vert u_0\Vert_{L^2(\T)}^2 - \Vert v_0\Vert_{L^2(\T)}^2),
%\end{equation*}
%where $u_0$ and $v_0$ solve the Schr\"odinger equations \eqref{eq:u0rel} and \eqref{eq:v0rel}, respectively. Thus, in particular their $L^2(\T)$ norms are conserved. Hence,
%$$
%Q^0(u_0,v_0) = \frac14 (\Vert u_0(0)\Vert_{L^2(\T)}^2 - \Vert v_0(0)\Vert_{L^2(\T)}^2).
%$$
Using the asymptotic expansion given in Theorem \ref{thm:main}, we calculate that the first-order correction term of the energy $\Ec(z)$ in \eqref{eq:asympE} is given by
\begin{equation}\label{eq:E0}
\Ec_0(\Us_0,\Vs_0) = \int_\T \frac14\left (\vert \nabla u_0 \vert^2 + \vert \nabla v_0\vert^2\right) - \frac{1}{p+1} \left \vert \textstyle \frac12(u_0+\overline{v_0}\mathrm{e}^{-2i c^2t})\right\vert^{2p+2}\,\mathrm{d}x,
\end{equation}
where $u_0,v_0$ solve the nonlinear Schr\"odinger system \eqref{eq:nlsw0}.

%%%%%%%%%%%%%%%%%%%%%%%%
\section{Construction of robust numerical schemes}\label{sec:nume}
%%%%%%%%%%%%%%%%%%%%%%%%
The aim of this section is to show that we can use the derived asymptotic expansions to construct numerical schemes for the non-relativistic Klein-Gordon equation \eqref{eq:kgr} which are robust with respect to the speed of light $c$. Firstly we underline our theoretical results in the linear case, where the exact solution to \eqref{eq:kgr} is analytically known, see Section \ref{sec:lin}. Finally, in Section \ref{sec:cub} we construct robust numerical methods for the cubic non-relativistic Klein-Gordon equation and discuss their convergence. Numerical experiments are included.

\subsection{Linear case}\label{sec:lin}
In the linear case, i.e. $p = 0$, our model problem reads
\begin{equation}\label{eq:kgrlin}
\begin{aligned}
&\frac{1}{c^2} \partial_{tt} z - \Delta z + c^2 z =  \lambda z,\\
& z(0,x) = \varphi=\sum_{a\in\Z}\hat{\varphi}_a \mathrm{e}^{iax}, \quad  \partial_t z(0,x) =c^2\gamma=c^2\sum_{a\in\Z}\hat{\gamma}_a \mathrm{e}^{iax}
\end{aligned}
\end{equation}
with the exact solution
\begin{equation}\label{eq:zlin}
\begin{aligned}
z(t,x) = \sum_{a\in\Z}\Big (\hat{\varphi}_a\cos(c t&\sqrt{a^2+c^2-\lambda})\\&+\frac{c}{\sqrt{a^2+c^2-\lambda}}\hat{\gamma}_a\sin(ct\sqrt{a^2+c^2-\lambda})\Big)\mathrm{e}^{iax}.
\end{aligned}
\end{equation}
In the linear case we can solve the first- and second-order limit systems exactly in Fourier, see Corollary \ref{cor:lin} and equation \eqref{eq:u0lin} and \eqref{eq:u1lin}, respectively. Thus, we can state the first- and second correction terms $z_0$ and $z_1$ in an exact way. A simple computation shows that
\begin{equation}\label{eq:z1lin}
z_0(t,x)= \sum_{a\in\Z}\Big(\textstyle \hat{\varphi}_a \cos\Big(\big(c^2+\frac{1}{2}(a^2-\lambda)\big)t\Big)+\hat{\gamma}_a\sin\Big(\big(c^2+\frac{1}{2}(a^2-\lambda)\big)t\Big)\Big)\mathrm{e}^{iax}.
\end{equation}
Thus, we nicely see that the difference between the exact solution $z$ of the linear Klein-Gordon equation and its first-order correction term $z_0$ in the non-relativistic limit $c\gg 1$ are the formal approximations
\begin{equation*}
 c\sqrt{a^2+c^2-\lambda} \approx c^2+\frac{1}{2}(a^2-\lambda),\quad \frac{c}{\sqrt{a^2+c^2-\lambda}}\approx 1,
\end{equation*}
which are of order $\mathcal{O}(c^{-2})$ for sufficiently smooth initial values $\varphi$ and $\gamma$.

For the second correction term $z_1$ we get after a short computation
\begin{equation}
\begin{aligned}
z_1(t,x) = &\sum_{a\in\Z} \textstyle \Big[\hat{\varphi}_a \frac{(a^2-\lambda)^2t}{8}\sin(\omega_{a,c} t)\\
&+ \textstyle \hat{\gamma}_a\Big( (-\frac{a^2-\lambda}{2})\sin(\omega_{a,c}t)-\frac{(a^2-\lambda)^2t}{8}\cos(\omega_{a,c}t)\Big)\Big]\mathrm{e}^{iax}.
\end{aligned}
\end{equation}
%\begin{equation}
%\begin{aligned}
%z_1(t,x) = &\sum_{a\in\Z} \textstyle \Big[\hat{\varphi}_a \big(\cos(\omega_{a,c}t)+\frac{(a^2+\lambda)^2t}{8c^2}\sin(\omega_{a,c} t)\big)\\
%&+ \textstyle \hat{\gamma}_a\Big( (1-\frac{a^2+\lambda}{2c^2})\sin(\omega_{a,c}t)t)-\frac{(a^2+\lambda)^2t}{8c^2}\cos((\omega_{a,c}t)t)\big)\Big]\mathrm{e}^{iax}.
%\end{aligned}
%\end{equation}
where 
$$
\omega_{a,c} = c^2+\textstyle\frac{1}{2}(a^2-\lambda). 
$$
Again this can easily be motivated by formally taking the next approximations in the exact solution $z$, i.e.
$$
 c\sqrt{a^2+c^2-\lambda} \approx c^2+\frac{1}{2}(a^2-\lambda)-\frac{1}{8c^2}(a^2-\lambda)^2,
 $$
 and
 $$\frac{c}{\sqrt{a^2+c^2-\lambda}}\approx 1 -\frac{a^2-\lambda}{2c^2},
$$
which are of order $\mathcal{O}(c^{-4})$ for sufficiently smooth initial values $\varphi$ and $\gamma$.

The numerical results for the initial values 
$$
\varphi = \frac{(2+i)}{\sqrt{5}}\cos(x), \quad\gamma = \frac{(1+i)}{\sqrt{2}}\sin(x)+\frac{1}{2}\cos(x)
$$
 and $\lambda = -1$ on the time interval $[0,1]$ are given in Figure \ref{figure0}.

\begin{figure}[h!]
\centering
\includegraphics[width=0.52\linewidth]{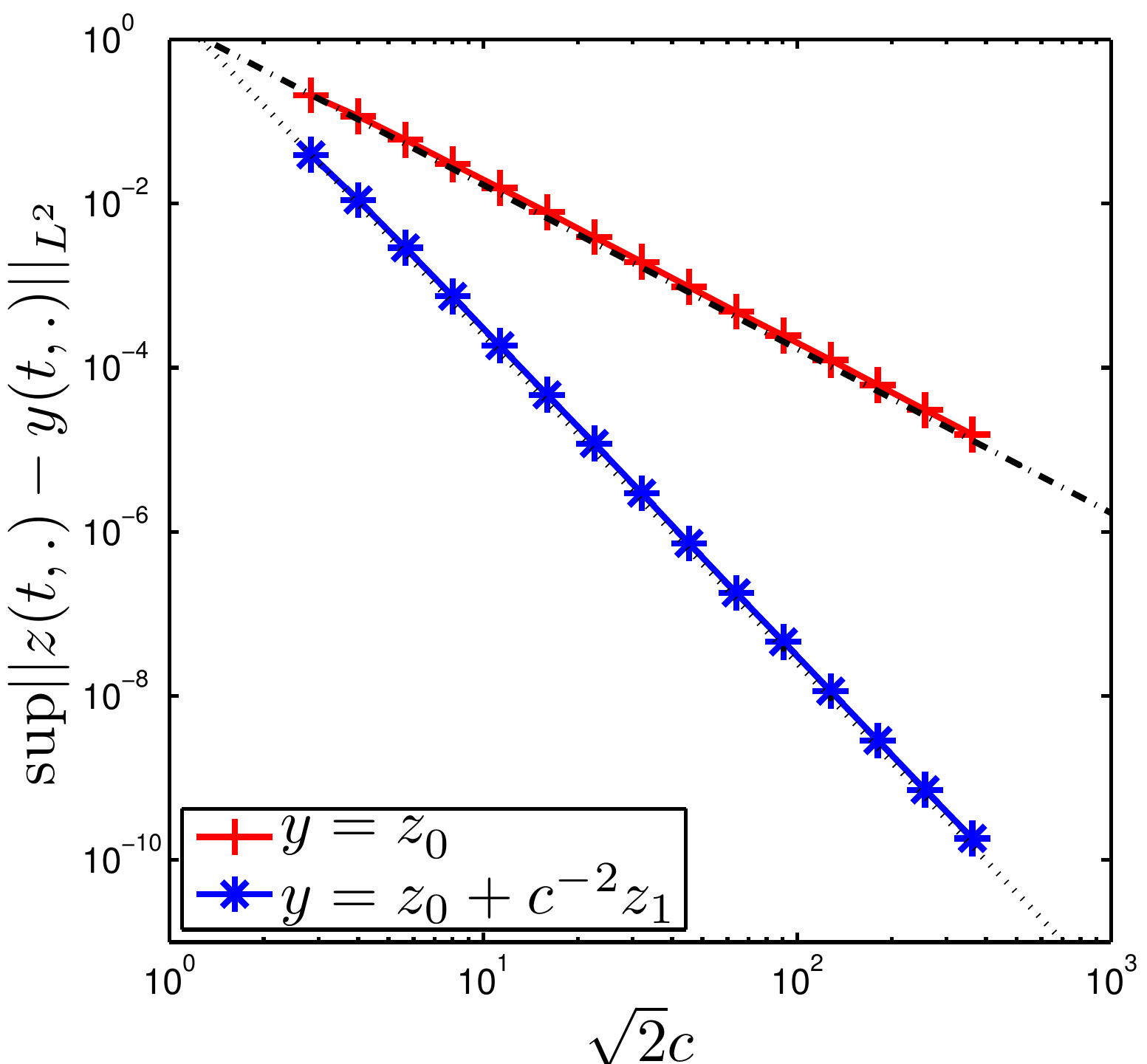}%{ctest2new2FIN}
\caption{Numerical simulations for the linear Klein-Gordon equation: Difference between the exact solution $z$ and the first- and second-order approximation terms $y = z_0$ (red, cross) and $y = z_0 + c^{-2}z_1$ (blue, star). The slope of the dash-dotted and dotted line is two and four, respectively.}\label{figure0}
\end{figure}

\subsection{Cubic non-linearity}\label{sec:cub}

In the cubic setting, i.e. $p = 1$, our model problem reads
\begin{equation}
\label{eq:kgrc}
\frac{1}{c^2} \partial_{tt} z - \Delta z + c^2 z = \lambda \vert z\vert^2 z, \quad z(0) = \varphi, \quad  \partial_t z(0) =c^2\gamma
\end{equation}
and the limit Schr\"odinger system \eqref{eq:nsl0} reduces to
\begin{equation}\label{eq:nsl0c}
\begin{aligned}
&i \partial_t u_0 - \frac{1}{2}\Delta u_0 - \frac{\lambda}{8}\big( \vert u_0\vert^2+2\vert v_0\vert^2\big)u_0 =0,\qquad u_0(0) = \varphi - i \gamma, \\
&i \partial_t v_0 - \frac{1}{2}\Delta v_0 -\frac{\lambda}{8}\big( \vert v_0\vert^2+2\vert u_0\vert^2\big)v_0=0,\qquad v_0(0) = \overline{\varphi + i \gamma},
\end{aligned}
\end{equation}
i.e. the first-order correction term $z_0$ is given by
\begin{equation}\label{eq:corr1}
z_0(t) = \textstyle \frac{1}{2} u_0(t)\mathrm{e}^{ic^2t} + \frac12 \overline{v_0}(t)\mathrm{e}^{-ic^2t},
\end{equation}
where $(u_0, v_0)$ solves the system of nonlinear Schr\"odinger equations \eqref{eq:nsl0c}. Here the enhancement of our Ansatz is clearly seen: the limit Schr\"odinger system can easily be simulated without any $c$-dependent time step restriction, and a well suited approximation to the exact solution in the non-relativistic limit regime is then obtained by simply multiplying the numerical approximations of $u_0$ and $v_0$ with the highly-oscillatory phases $\mathrm{e}^{ic^2t}$ and $\mathrm{e}^{-ic^2t}$, respectively.

For the numerical time integration of the first-order limit system, i.e. the Schr\"odinger system \eqref{eq:nsl0c}, we apply an exponential Strang splitting method, where we split the limit system in a natural way into the kinetic
\begin{equation}\label{eq:NSLT} 
i\partial_t \begin{bmatrix} u_0 \\ v_0\end{bmatrix} = \frac{1}{2}\Delta\begin{bmatrix} u_0 \\ v_0\end{bmatrix} 
\end{equation}
and potential part
\begin{equation}\label{eq:NSLP} 
i\partial_t \begin{bmatrix} u_0 \\ v_0\end{bmatrix} = \frac{\lambda}{8}\begin{bmatrix} \vert u_0\vert^2+ 2\vert v_0\vert^2 & 0\\ 0 & \vert v_0\vert^2 +2 \vert u_0\vert^2 \end{bmatrix}\begin{bmatrix} u_0 \\ v_0\end{bmatrix}.
\end{equation}
The exact flow $\Phi^t(u_0(0),v_0(0))= \Phi^t_{T+P}(u_0(0),v_0(0))$ of the coupled system \eqref{eq:nsl0c} at time $t_n = n\tau$ is then approximated by
\begin{equation}\label{eq:Lie}
\Phi^{t_n}\approx (\Phi^{\tau/2}_P\circ\Phi^{\tau}_T \circ \Phi^{\tau/2}_P)^n,
\end{equation}
where $\Phi_T^t(u_0(0),v_0(0))$ and $\Phi_P^t(u_0(0),v_0(0))$ are the exact flows associated to the kinetic Hamiltonian \eqref{eq:NSLT} and potential Hamiltonian \eqref{eq:NSLP}, respectively.

Note that the kinetic equation \eqref{eq:NSLT} can be solved exactly in Fourier space. Furthermore note that in the potential equation \eqref{eq:NSLP} the moduli of $u_0$ and $v_0$ are conserved quantities, i.e. $\vert u_0(t)\vert^2 = \vert u_0(0)\vert^2$ an $\vert v_0(t)\vert^2 = \vert v_0(0)\vert^2$ for all $t$ and thus we can solve both split equations in an exact way. 

For the space discretization, we use pseudo-spectral Fourier with mesh-size $h$ and grid points $x_j = {2jh\pi}$, $j = -K,\ldots,K-1$ where $K = 1/h \in \N$ is the number of frequencies. Note the fully discrete scheme can be easily implement using the Fast Fourier transform algorithm. We refer to \cite{Lubich08,Faou12} for convergence results over finite time of such splitting schemes.

\begin{example}[First-order approximation term]\label{ex:firstCorr}
In the first example we are interested in the asymptotic expansion up to the first-order correction term $z_0$. The asymptotic expansion derived in the previous section allows us the following convergence result for the natural choice of numerical approximation to $z_0$.
\begin{theorem}\label{thm:con0}
Let $s > d/2$ and assume that $\varphi$ and $\gamma$ are smooth functions defined on the one-dimensional torus $\T$. 
Then there exist constants $T, c_0, C, h_0$ and $\tau_0$ such that the following hold: for $c > c_0$,  $\tau < \tau_0$ and $h < h_0$, if we define by
\begin{equation}\label{eq:z0num}
z^{n,h}_0 := \textstyle \frac12 u_0^{n,h} \mathrm{e}^{ic^2t_n}+\frac12\overline{v_0^{n,h}}\mathrm{e}^{-ic^2t_n}
\end{equation}
the numerical approximation of the first-order approximation term $z_0(t)$ at time $t_n = n\tau$, where $u_0^{n,h}$ and $v_0^{n,h}$ denote the numerical approximations to the first-order limit system \eqref{eq:nsl0c} obtained by the Strang splitting \eqref{eq:Lie} at time $t_n = n\tau$ with a discrete grid of mesh-size $h$; then if $z(t)$ denotes the exact solution of the equation \eqref{eq:kgrc}, we have 
\begin{equation*}
\Vert z(t_n) -z^{n,h}_0\Vert_{L^2} \leq C( \tau^2 + h^{s} + c^{-2})\text{ for all } t_n \leq T.
\end{equation*}
\end{theorem}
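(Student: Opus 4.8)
The plan is to combine the asymptotic expansion of Theorem~\ref{thm:main} (applied at order $N=0$) with a standard convergence estimate for the Strang splitting applied to the limit Schr\"odinger system \eqref{eq:nsl0c}, and then transfer these bounds back to the original variable $z$ through the relation \eqref{eq:zuv}. Concretely, writing $\wb^{0}(t,x) = \Wb_0(t,c^{-2}t,x) = \wb_0(t) e^{ic^2 t}$ for the leading term, Theorem~\ref{thm:main} with $N=0$ gives $\Norm{\wb(t) - \wb^0(t)}{H^s} \leq C_0 c^{-2}$ for $t \leq T$, provided $\varphi,\gamma \in H^{s+4}$, which holds since they are assumed smooth. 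In terms of $z$ this yields $\Vert z(t) - z_0(t)\Vert_{L^2} \leq C c^{-2}$, where $z_0$ is given by \eqref{eq:corr1} and $(u_0,v_0)$ solves \eqref{eq:nsl0c}; the $L^2$ norm suffices here even though the expansion is controlled in $H^s$, since $s>d/2 \geq 0$.

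Next I would insert the numerical error. By the triangle inequality,
\begin{equation*}
\Vert z(t_n) - z_0^{n,h}\Vert_{L^2} \leq \Vert z(t_n) - z_0(t_n)\Vert_{L^2} + \Vert z_0(t_n) - z_0^{n,h}\Vert_{L^2},
\end{equation*}
and the first term is $\mathcal{O}(c^{-2})$ by the previous step. For the second term, recalling $z_0(t_n) = \frac12 u_0(t_n)e^{ic^2 t_n} + \frac12 \overline{v_0(t_n)}e^{-ic^2 t_n}$ and $z_0^{n,h} = \frac12 u_0^{n,h}e^{ic^2 t_n} + \frac12 \overline{v_0^{n,h}}e^{-ic^2 t_n}$, the highly oscillatory phases have modulus one and therefore drop out of the $L^2$ norm, giving
\begin{equation*}
\Vert z_0(t_n) - z_0^{n,h}\Vert_{L^2} \leq \textstyle\frac12\big(\Vert u_0(t_n) - u_0^{n,h}\Vert_{L^2} + \Vert v_0(t_n) - v_0^{n,h}\Vert_{L^2}\big).
\end{equation*}
It then remains to bound the splitting-plus-spatial-discretization error for the coupled system \eqref{eq:nsl0c}. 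This is where I would invoke the classical theory of Lie--Trotter/Strang splitting for nonlinear Schr\"odinger equations (the references \cite{Lubich08,Faou12} cited just before the theorem): on the finite interval $(0,T)$, with $u_0,v_0$ remaining in a fixed Sobolev ball by the well-posedness of \eqref{eq:nsl0c} for smooth data, the Strang splitting \eqref{eq:Lie} is second order in $\tau$, and the Fourier pseudo-spectral truncation on a grid of mesh size $h$ contributes an error $\mathcal{O}(h^s)$ by the regularity of the solution. One checks that the two split flows $\Phi_T$ and $\Phi_P$ are each solved exactly (the kinetic part in Fourier, the potential part because $|u_0|,|v_0|$ are pointwise conserved), so no additional subflow error enters. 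Combining, $\Vert u_0(t_n)-u_0^{n,h}\Vert_{L^2} + \Vert v_0(t_n)-v_0^{n,h}\Vert_{L^2} \leq C(\tau^2 + h^s)$ uniformly for $t_n \leq T$, and adding the three contributions gives the claimed bound $C(\tau^2 + h^s + c^{-2})$.

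The main obstacle is the rigorous justification of the $\mathcal{O}(\tau^2+h^s)$ splitting error for the \emph{coupled, cubic} system \eqref{eq:nsl0c}: the standard results are usually stated for a single NLS, so one must either cite a version general enough to cover the $2\times 2$ coupled system with the particular nonlinearity in \eqref{eq:NSLP}, or sketch the Lady Windermere argument, checking that the local error of one Strang step is $\mathcal{O}(\tau^3)$ in $L^2$ (using the $H^2$ or $H^s$ regularity of $u_0,v_0$, hence of the commutator of the kinetic and potential vector fields) and that the scheme is stable, i.e. the numerical flow is Lipschitz on the relevant ball uniformly in $n\tau \leq T$. A minor additional point is to confirm that all constants—in particular $T$, the Sobolev ball radius, and $c_0$—can be chosen independently of $c$; this follows because \eqref{eq:nsl0c} and its data do not involve $c$ at all, and the $c^{-2}$ term comes solely from Theorem~\ref{thm:main}, whose constants are already $c$-uniform for $c > c_0$.
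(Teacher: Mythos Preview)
Your proposal is correct and follows essentially the same route as the paper: both split the error via the triangle inequality into the asymptotic-expansion error $\Vert z(t_n)-z_0(t_n)\Vert_{L^2}\leq Cc^{-2}$ from Theorem~\ref{thm:main} and the numerical error $\Vert z_0(t_n)-z_0^{n,h}\Vert_{L^2}$, then observe that the unimodular phases drop out and invoke the standard Strang-splitting convergence theory \cite{Lubich08,Faou12} for the $c$-independent system \eqref{eq:nsl0c}. The paper likewise acknowledges that the cited results do not literally cover the coupled cubic system and treats this as a routine variation, exactly as you flagged.
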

\begin{proof}
We have
\begin{equation}\label{eq:boundi}
\Vert z(t_n) - z^{n,h}_0\Vert_{L^2} \leq \Vert z(t_n) - z_0(t_n)\Vert_{L^2} + \Vert z_0(t_n) - z^{n,h}_0\Vert_{L^2}.
\end{equation}
By Theorem \ref{thm:main} we have over the time interval $(0,T)$ and $c > c_0$,
\begin{equation}\label{eq:boundi1}
\Vert z(t_n) - z_0(t_n)\Vert_{L^2}\leq C  c^{-2}.
\end{equation}
To bound the second term we use the following inequality
\begin{equation*}
\begin{aligned}
\Vert z_0(t_n) - z^{n,h}_0\Vert_{L^2} &\leq  \Vert\mathrm{e}^{ic^2t_n}( u_0(t_n) - u_0^{n,h})\Vert_{L^2}+\Vert \mathrm{e}^{ic^2t_n}(v_0(t_n) - v_0^{n,h})\Vert_{L^2}\\
&\leq  \Vert  u_0(t_n) - u_0^{n,h} \Vert_{L^2}+\Vert v_0(t_n) - v_0^{n,h} \Vert_{L^2}.
\end{aligned}
\end{equation*}
Using arguments similar to  \cite[Theorem IV.17]{Faou12} (see also \cite{Lubich08}), it is then possible to prove that 
\begin{equation}
\label{eq:umun}
\Vert   u_0(t_n) - u_0^{n,h}\Vert_{L^2}+\Vert  v_0(t_n) - v_0^{n,h}\Vert_{L^2} \leq C( \tau^2 + h^{s} ), 
\end{equation}
if the exact solution is smooth enough (i.e. belongs to some Sobolev space $H^{s'}$ with $s' > s$ sufficiently large). Note that the proof of the fully discrete scheme in \cite{Faou12} is given only for the Lie-splitting, real initial data (that is $\varphi$ and $\gamma$ real valued functions) and uses function spaces based on the Wiener algebra (the $\ell_s^1$ spaces). However, the rigorous proof of \eqref{eq:umun} is essentially a variation on the same theme and does not present any particular difficulty as soon as the solution is smooth enough. 
Using this bound, we obtain that 
\begin{equation}\label{eq:boundi2}
\Vert z_0(t_n) - z^{n,h}_0\Vert_{L^2} \leq C( \tau^2 + h^{s} ).
\end{equation}
Plugging the bounds \eqref{eq:boundi1} and \eqref{eq:boundi2} into \eqref{eq:boundi} yields the desired convergence result.
\end{proof}

The numerical results for the initial data 
$$ \varphi = \frac{(2+i)}{\sqrt{5}}\cos(x),\,\gamma = \frac{(1+i)}{\sqrt{2}}\sin(x)+\frac{1}{2}\cos(x)
$$ 
and $\lambda = -1$ on the time interval $[0,0.1]$ are given in Figure \ref{figure1}.
As a reference solution for $z(t)$ we take the numerical approximation of the Klein-Gordon equation \eqref{eq:kgrc} obtained with an exponential Gautschi method, proposed in \cite{BaoD12}, with a very small time step $\tau$ satisfying the necessary condition $\tau < c^2 h$. Note that the convergence result for the exponential Gautschi method given in \cite[Theorem 9]{BaoD12}
shows a convergence rate of order $\mathcal{O} (\tau^2 c^4 + h^{s})$ under some {\em a priori} smoothness assumptions and uniform bounds for the exact solution. 

\begin{figure}[h!]
\centering
\includegraphics[width=0.52\linewidth]{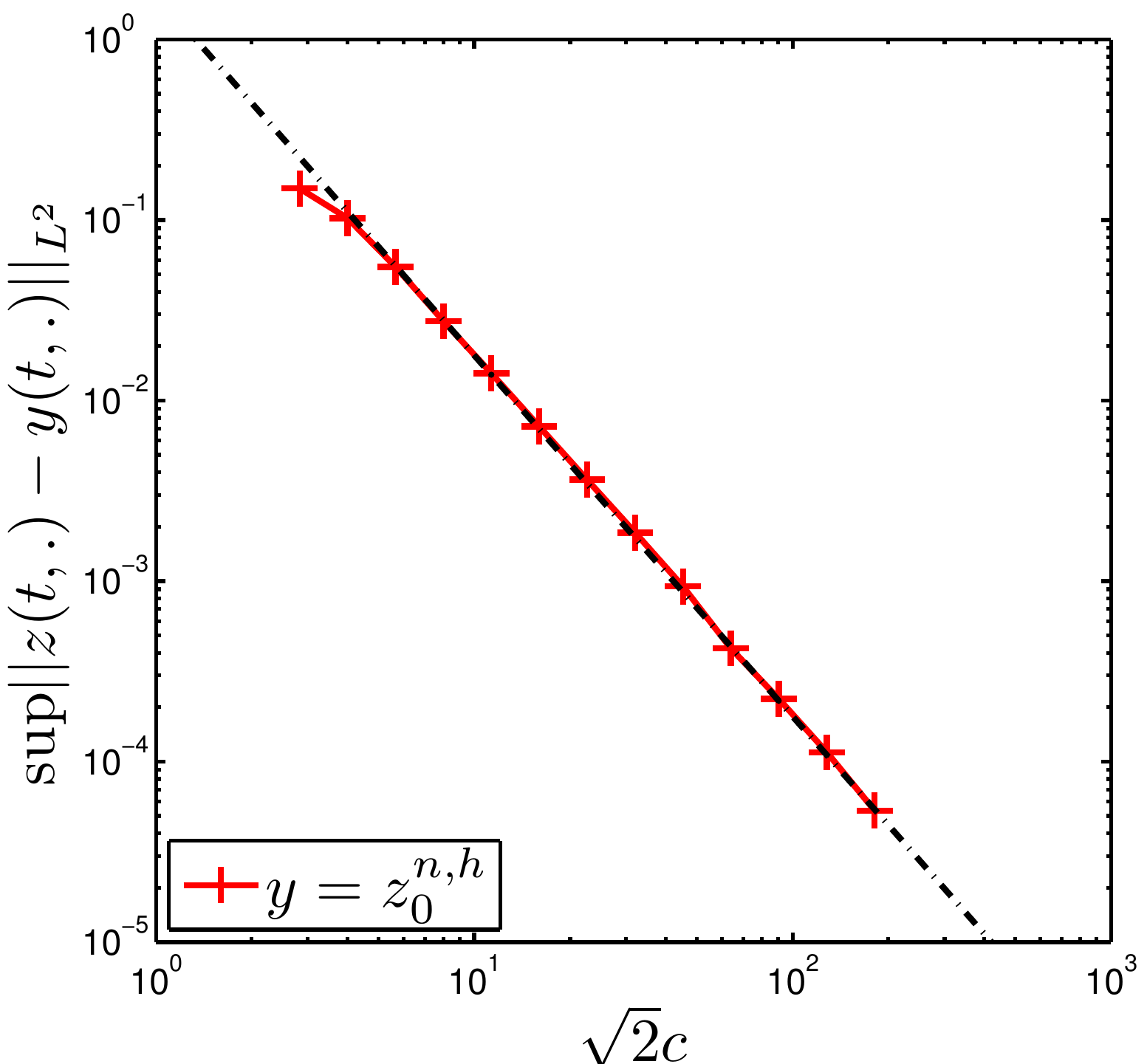}%{ctest1new2FIN}
\caption{Numerical simulations of Example \ref{ex:firstCorr}: Difference between the reference solution $z$ obtained with an exponential Gautschi method with time step $\tau = 10^{-6}$ and the first-order approximation solution $y= z_0^{n,h}$ obtained with $\tau = 10^{-2}$. The slope of the dash-dotted line is two.}\label{figure1}
\end{figure}
\end{example}

\begin{example}[Second-order approximation term]\label{ex:2ndCorr}
In this example we simulate the first- and second-order approximation terms $z_0$ and $z_1$ given in \eqref{eq:corr1} and \eqref{eq:corr2}, respectively. For the numerical approximation of the first approximation term we proceed as in Example \ref{ex:firstCorr}. For the numerical approximation of the correction term $z_1$, we solve the equation for $\xi_1$, i.e. equation \eqref{eq:u1rel}, with a symmetric Strang splitting based on the following split-equations: The kinetic equation
\begin{equation*}
i\partial_t \xi =  \textstyle \frac12 \Delta \xi,
\end{equation*}
which can be solved exactly in Fourier, and the linear potential equation
\begin{equation}
\label{eq:poteq}
i\partial_t \xi = \lambda \textstyle \frac34 \vert u_0\vert^2 \xi +\lambda \frac38 (u_0)^2\overline{\xi}+ \textstyle \frac18 \Delta^2 u_0 +\lambda^2 \frac{51}{256}\vert u_0\vert^4 u_0 +\lambda \frac{3}{16}\Delta(\vert u_0\vert u_0),
\end{equation}
which we solve as described in the following. We set $\alpha_0 = \mathrm{Re}(u_0)$, $\beta_0 =\mathrm{Im}(u_0)$, $\alpha =  \mathrm{Re}(\xi)$, $\beta = \mathrm{Im}(\xi)$,  and furthermore $g_0(t) = g(\alpha_0(t),\beta_0(t)) = \textstyle \frac18 \Delta^2 u_0 +\lambda^2 \frac{51}{256}\vert u_0\vert^4 u_0 +\lambda \frac{3}{16}\Delta(\vert u_0\vert u_0)$. Then we can rewrite the potential equation as
\begin{equation}\label{eq:poti}
\begin{aligned}
\partial_t \begin{bmatrix} \alpha \\ \beta\end{bmatrix} = -\lambda \frac38\begin{bmatrix} 2 \alpha_0\beta_0\qquad &( (\alpha_0)^2+3 (\beta_0)^2)\\ - ((\beta_0)^2+3 (\alpha_0)^2)\qquad &- 2 \alpha_0\beta_0\end{bmatrix}\begin{bmatrix} \alpha \\ \beta\end{bmatrix} +\begin{bmatrix}\mathrm{Im}(g_0)\\ -\mathrm{Re}(g_0)\end{bmatrix}.
\end{aligned}
\end{equation}
We set
\begin{equation*}
A(\alpha_0,\beta_0) = -\lambda \frac38\begin{bmatrix} 2 \alpha_0\beta_0\qquad &( (\alpha_0)^2+3 (\beta_0)^2)\\ - ((\beta_0)^2+3 (\alpha_0)^2)\qquad &- 2 \alpha_0\beta_0\end{bmatrix}
\end{equation*}
and approximate the potential equation \eqref{eq:poti} with an exponential trapezoidal rule, i.e. for $\alpha^n_k \approx \alpha_k(t_n)$, $\beta^n_k \approx \beta_k(t_n)$, $k=0,1$, $g^n = g(\alpha^n_0,\beta^n_0)$, $t_n = n\tau$, we set
\begin{multline}
\label{eq:num2corr}
\begin{bmatrix} \alpha^{n+1}\\\beta^{n+1}\end{bmatrix}= \mathrm{e}^{\frac{\tau}{2}\big(A(\alpha_0^{n+1},\beta_0^{n+1})+A(\alpha_0^{n},\beta_0^{n})\big)}\textstyle\left(\begin{bmatrix} \alpha^{n}\\ \beta^{n}\end{bmatrix} + \frac{\tau}{2}\begin{bmatrix}\mathrm{Im}(g_0^n)\\ -\mathrm{Re}(g_0^n)\end{bmatrix} \right) \\� +\textstyle\frac{\tau}{2}\begin{bmatrix}\mathrm{Im}(g_0^{n+1})\\ -\mathrm{Re}(g_0^{n+1})\end{bmatrix}.
\end{multline}
The numerical approximation $\xi^{n+1}$ to the exact solution $\xi(t_{n+1})$ of the potential equation \eqref{eq:poteq} starting in a given point $\xi^n$ is then simply given by $\xi^{n+1} = \alpha^{n+1} + i \beta^{n+1}$. 

Concerning the space discretization, we use again the Fourier pseudo-spectral method described in the previous Section, and we denote by $h$ the corresponding mesh size. 
Note that in this setting, the approximation of $g_0$ can be easily made using FFT.  

Let $u_0^{n,h}$ be defined as in the previous section, and let $\xi^{n,h}_1$ denote the numerical approximation to the full system \eqref{eq:u1rel} at time $t_n$ obtained by the fully discrete Strang splitting described above, where we make half a step with the kinetic, a full step with the potential and again half a step with the kinetic part. Then, as the numerical approximation to the exact second-order approximation term  at time $t_n = n\tau$ we take
\begin{multline}
\label{eq:approxnh}
z_0^{n,h} + c^{-2}z_1^{n,h}:=  \textstyle \frac12(1+\frac{1}{c^2}\frac{3\lambda}{16}\vert u^{n,h}_0\vert^2) (u_0^n\mathrm{e}^{ic^2t_n}+\overline{u_0^{n,h}}\mathrm{e}^{-ic^2t_n}) \\�
- \textstyle\frac{\lambda}{64 c^2}\big( (u_0^{n,h})^3\mathrm{e}^{3ic^2t_n} +(\overline{u_0^{n,h}})^3\mathrm{e}^{-3ic^2t_n}\big)
 +  \textstyle \frac{1}{2c^2} \big( \xi_1^{n,h} \mathrm{e}^{ic^2t_n}+\overline{\xi_1^{n,h}}\mathrm{e}^{-ic^2t_n}\big).
\end{multline}
For this approximation we have the following convergence result.
\begin{theorem}
Let $s > d/2$ and assume that $\varphi$ and $\gamma$ are smooth functions defined on the one-dimensional torus $\T$. 
Then there exist constants $T, c_0, C, h_0$ and $\tau_0$ such that the following hold: for $c > c_0$,  $\tau < \tau_0$ and $h < h_0$, then if $z(t)$ denotes the exact solution of the equation \eqref{eq:kgrc}, we have 
\begin{equation*}
\Vert z(t_n) -(z_0^{n,h} + c^{-2}z_1^{n,h})\Vert_{L^2} \leq C( \tau^2 + h^{s} + c^{-4})\text{ for all } t_n \leq T,
\end{equation*}
where the approximation $z_0^{n,h} + c^{-2}z_1^{n,h}$ is defined in \eqref{eq:approxnh}. 
\end{theorem}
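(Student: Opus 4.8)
The plan is to argue exactly as in the proof of Theorem~\ref{thm:con0}, isolating first the modelling error controlled by the asymptotic expansion and then the genuinely numerical error. First I would split
\[
\Vert z(t_n) - (z_0^{n,h}+c^{-2}z_1^{n,h})\Vert_{L^2} \leq \Vert z(t_n) - (z_0(t_n)+c^{-2}z_1(t_n))\Vert_{L^2} + \Vert z_0(t_n)-z_0^{n,h}\Vert_{L^2} + c^{-2}\Vert z_1(t_n)-z_1^{n,h}\Vert_{L^2}.
\]
By Corollary~\ref{cor:cub}, which rests on Theorem~\ref{thm:main}, and since $\varphi,\gamma$ are smooth, the first term is $\leq Cc^{-4}$ for $c>c_0$ and $t_n\leq T$; the second term is $\leq C(\tau^2+h^s)$ by the estimate \eqref{eq:umun} already used in the proof of Theorem~\ref{thm:con0}. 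Hence everything reduces to proving $\Vert z_1(t_n)-z_1^{n,h}\Vert_{L^2}\leq C(\tau^2+h^s)$: the prefactor $c^{-2}\leq c_0^{-2}$ then makes this contribution harmless and gives the asserted bound $C(\tau^2+h^s+c^{-4})$.

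To estimate $\Vert z_1(t_n)-z_1^{n,h}\Vert_{L^2}$, I would use that, by \eqref{eq:corr2} and \eqref{eq:approxnh}, $z_1$ is — up to the unimodular phases $\mathrm{e}^{\pm ic^2t}$ and $\mathrm{e}^{\pm 3ic^2t}$ — an explicit cubic polynomial in $u_0$ plus a linear term in $\xi_1$, and that $z_1^{n,h}$ is obtained by substituting the numerical iterates $u_0^{n,h}$ and $\xi_1^{n,h}$. Since the phases have modulus one, $H^s$ with $s>d/2$ is an algebra by \eqref{eq:bil1}, and $u_0,u_0^{n,h},\xi_1,\xi_1^{n,h}$ all stay in a fixed ball of $H^s$ uniformly in $c,\tau,h$ (by Theorem~\ref{thm:main}, by the a priori bounds behind \eqref{eq:umun}, and by the stability of the $\xi_1$-scheme discussed below), one gets
\[
\Vert z_1(t_n)-z_1^{n,h}\Vert_{L^2} \leq C\big(\Vert u_0(t_n)-u_0^{n,h}\Vert_{H^s} + \Vert \xi_1(t_n)-\xi_1^{n,h}\Vert_{L^2}\big).
\]
Because the data are smooth, the cubic Schr\"odinger flow \eqref{eq:u0rel} keeps $u_0$ in $C([0,T],H^\mu)$ for every $\mu$, so the splitting-convergence analysis of \cite{Faou12,Lubich08} yields $\Vert u_0(t_n)-u_0^{n,h}\Vert_{H^\mu}\leq C_\mu(\tau^2+h^\mu)$ for every $\mu$. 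It remains to prove the same bound (in $L^2$, say) for $\xi_1$.

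The function $\xi_1$ solves the linear non-autonomous equation \eqref{eq:u1rel}, which the scheme integrates by a Strang splitting between the kinetic part $i\partial_t\xi=\tfrac{1}{2}\Delta\xi$ (solved exactly in Fourier) and the potential part \eqref{eq:poteq}, the latter discretised by the exponential trapezoidal rule \eqref{eq:num2corr} for the real system \eqref{eq:poti}; I would prove convergence of this fully discrete scheme along the usual stability-plus-consistency lines. For stability, the matrix $A(\alpha_0,\beta_0)$ in \eqref{eq:poti} is a bounded multiplication operator (its pointwise norm controlled by $\Vert u_0\Vert_{L^\infty}^2$), with zero trace and determinant $3\big(\tfrac{3}{8}\lambda\big)^2|u_0|^4\geq 0$, hence with purely imaginary eigenvalues; therefore $\mathrm{e}^{\tau A(\alpha_0(x),\beta_0(x))}$ is bounded uniformly in $x$ and $\tau$, and the averaged exponential appearing in \eqref{eq:num2corr} is bounded by $\mathrm{e}^{\mathcal{O}(\tau)}$, which via a discrete Gronwall argument gives $\Vert \xi_1^{n,h}\Vert_{H^s}\leq C$ for $t_n\leq T$ as well as stability of the scheme under $\mathcal{O}(\tau^2+h^\mu)$ perturbations of its coefficients. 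For consistency, the Strang splitting is of order two (the kinetic half-steps being exact), and the step \eqref{eq:num2corr} is a second-order approximation both of the propagator of \eqref{eq:poti} (trapezoidal quadrature of the time integral of $A$, the higher Magnus commutator terms being $\mathcal{O}(\tau^3)$) and of the associated Duhamel integral of the source; the local error is thus $\mathcal{O}(\tau^3)$ and, summed over $n\leq T/\tau$, the global time error is $\mathcal{O}(\tau^2)$. The error from evaluating the coefficients and source of \eqref{eq:poteq}--\eqref{eq:poti} at the iterate $u_0^{n,h}$ instead of at $u_0(t_n)$ — the source involves $\Delta^2 u_0$, so one uses the $u_0$-estimate in a high enough $H^\mu$ — is, by their Lipschitz dependence on $u_0$, again $\mathcal{O}(\tau^2+h^\mu)\subseteq \mathcal{O}(\tau^2+h^s)$ for $h<h_0$ and is absorbed by the stability estimate; and the Fourier pseudo-spectral discretisation contributes the $\mathcal{O}(h^s)$ term exactly as for the $u_0$-scheme, since $\xi_1$ inherits the smoothness of $u_0$ through \eqref{eq:u1rel}. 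Collecting these bounds gives $\Vert \xi_1(t_n)-\xi_1^{n,h}\Vert_{L^2}\leq C(\tau^2+h^s)$, hence $\Vert z_1(t_n)-z_1^{n,h}\Vert_{L^2}\leq C(\tau^2+h^s)$, and the assertion follows.

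I expect the main obstacle to be this last step: the convergence analysis of the fully discrete, non-autonomous linear scheme for $\xi_1$, in which the splitting error, the quadrature error of the exponential trapezoidal rule, the error propagated from the precomputed trajectory $(u_0^{n,h})_n$ into the coefficients and source, and the space-discretisation error all have to be combined in one Gronwall estimate, while one verifies that no constant degrades as $c\to\infty$ — which is clear, since $c$ enters the construction of $z_1^{n,h}$ only through the unimodular phases $\mathrm{e}^{\pm ic^2t_n}$ and $\mathrm{e}^{\pm 3ic^2t_n}$. I note, finally, that no implicit-solvability or fixed-point difficulty arises, since once the trajectory $(u_0^{n,h})_n$ has been computed the recursion \eqref{eq:num2corr} for $(\xi_1^{n,h})_n$ is explicit.
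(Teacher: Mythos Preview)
Your proposal is correct and follows exactly the same approach as the paper: split off the asymptotic error $\Vert z(t_n)-(z_0(t_n)+c^{-2}z_1(t_n))\Vert_{L^2}\leq Cc^{-4}$ via Theorem~\ref{thm:main}, then control the numerical error on $z_0$ and $z_1$ by the second-order convergence of the Strang/exponential-trapezoidal scheme as in Theorem~\ref{thm:con0} and \cite{Faou12,Lubich08}. The paper's proof is in fact only a two-line sketch deferring the details to these references, so your write-up is considerably more explicit (in particular on the stability of \eqref{eq:num2corr} and the propagation of the $u_0^{n,h}$-error into the coefficients of \eqref{eq:poteq}) than what the paper itself provides.
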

\begin{proof}
Note that by Theorem \ref{thm:main} we have over the time interval $(0,T)$ and $c>c_0$,
$$ \Vert z(t_n) - (z_0(t_n) + c^{-2}z_1(t_n))\Vert_{L^2} \leq C c^{-4}.$$
The rest of the proof follows the line of argumentation given for the proof to Theorem \ref{thm:con0}, since the scheme described in \eqref{eq:num2corr} is second-order convergent. We do not give the detail of this proof. Again it is a variation on the same theme as in \cite{Faou12,Lubich08}. 
\end{proof}
The numerical results for the initial data 
$$
\varphi = \cos(x), \,\gamma =\frac14 \sin(x)+\frac12 \cos(x)
$$
and $\lambda = -1$ on the time interval $[0,0.1]$ are given in Figure \ref{figure2b}.
\begin{figure}[h!]
\centering
\includegraphics[width=0.52\linewidth]{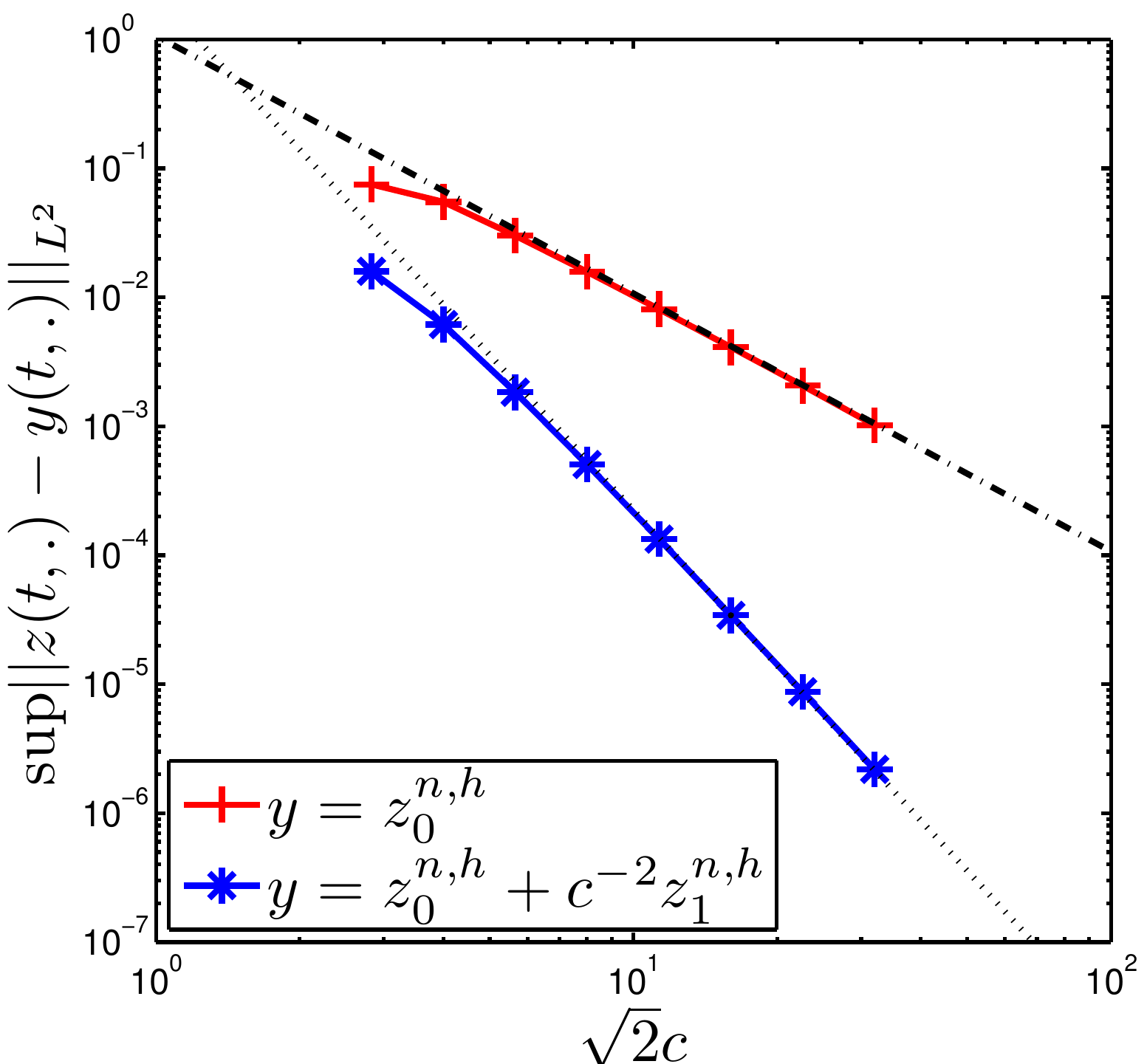}%{ctest2newb2FIN}
\caption{Numerical simulations of Example \ref{ex:2ndCorr}: Difference between the reference solution $z$ obtained with an exponential Gautschi method with time step $\tau = 10^{-7}$ and the first- and second-order approximation solutions $y = z_0^{n,h}$ (red , cross) and $y = z_0^{n,h} + c^{-2}z_1^{n,h}$ (blue, star) obtained with $\tau = 10^{-3}$. The slope of the dash-dotted and dotted line is two and four, respectively.}\label{figure2b}
\end{figure}
%\begin{figure}[h!]
%\centering
%\includegraphics[width=0.52\linewidth]{ctest2new}
%\caption{Numerical Simulation of Example \ref{ex:2ndCorr} with different initial value:  $z(0) = \cos(x)$ and $\partial_t z(0) = c^2 \sin(x)$. Difference between the reference solution $z$ obtained with an exponential Gautschi method with time step $\tau = 10^{-8}$ and the first- and second-order limit solutions $z^1$ and $z^2$ obtained with $\tau = 10^{-2}$. The slope of the dash-dotted and dotted line is two and four, respectively.}\label{figure2}
%\end{figure}
\end{example}

\begin{example}[Energy and charge-density deviation]\label{ex:EQ}
Here we simulate the deviation of the the first energy and charge correction term \eqref{eq:E0} and \eqref{eq:Q0}, respectively. Also, we simulate the energy and charge deviation of the first-correction term $z_0(t)$, i.e. $E(z_0(t))$ and $Q(z_0(t))$, where the energy and charge are defined in \eqref{eq:eni} and \eqref{eq:chargi}, respectively.

We set $E(z_0^{n,h})= E(u_0^{n,h}\mathrm{e}^{ic^2t_n},v_0^{n,h}\mathrm{e}^{ic^2t_n})$ the numerical energy of the first correction term $z_0$,  for $t_n = n\tau$, and where $u_0^{n,h}$ and $v_0^{n,h}$ are the numerical approximations of the first limit system \eqref{eq:nsl0c} again obtained by the Strang splitting \eqref{eq:Lie}. Similarly, we set $\Ec_0^{n,h}= \Ec_0(u_0^{n,h}\mathrm{e}^{ic^2t_n},v_0^{n,h}\mathrm{e}^{ic^2t_n})$ the approximation of the first term in the asymptotic expansion of the energy. In the same way, we define the numerical charge $Q(z_0^{n,h})= Q(u_0^{n,h}\mathrm{e}^{ic^2t_n},v_0^{n,h}\mathrm{e}^{ic^2t_n})$ of the first correction term, as well as the numerical approximation of the first term in the asymptotic expansion 
 $Q_0^{n,h} = Q_0(u_0^{n,h},v_0^{n,h})$.
 
We carry out the simulations on the time interval $[0,1]$ with the time step $\tau = 10^{-3}$. As the initial values we choose the same as in Example \ref{ex:firstCorr}, but normalize them with respect to the $H^1$ norm. We nicely see that the first energy correction term $\Ec_0$ and the first charge correction term $Q_0$ are conserved by the Strang splitting, see Figure \ref{figureE} and Figure \ref{figureQ}, respectively. Furthermore, Figure \ref{figureQz0} and Figure \ref{figureEz0} underline that $\vert Q(z_0^{0,h})-Q(z_0^{n,h})\vert = \mathcal{O}(c^{-2})$ and $\vert E(z_0^{0,h})-E(z_0^{n,h})\vert = \mathcal{O}(1)$.
\begin{figure}[h!]
\centering
\includegraphics[width=0.99\linewidth]{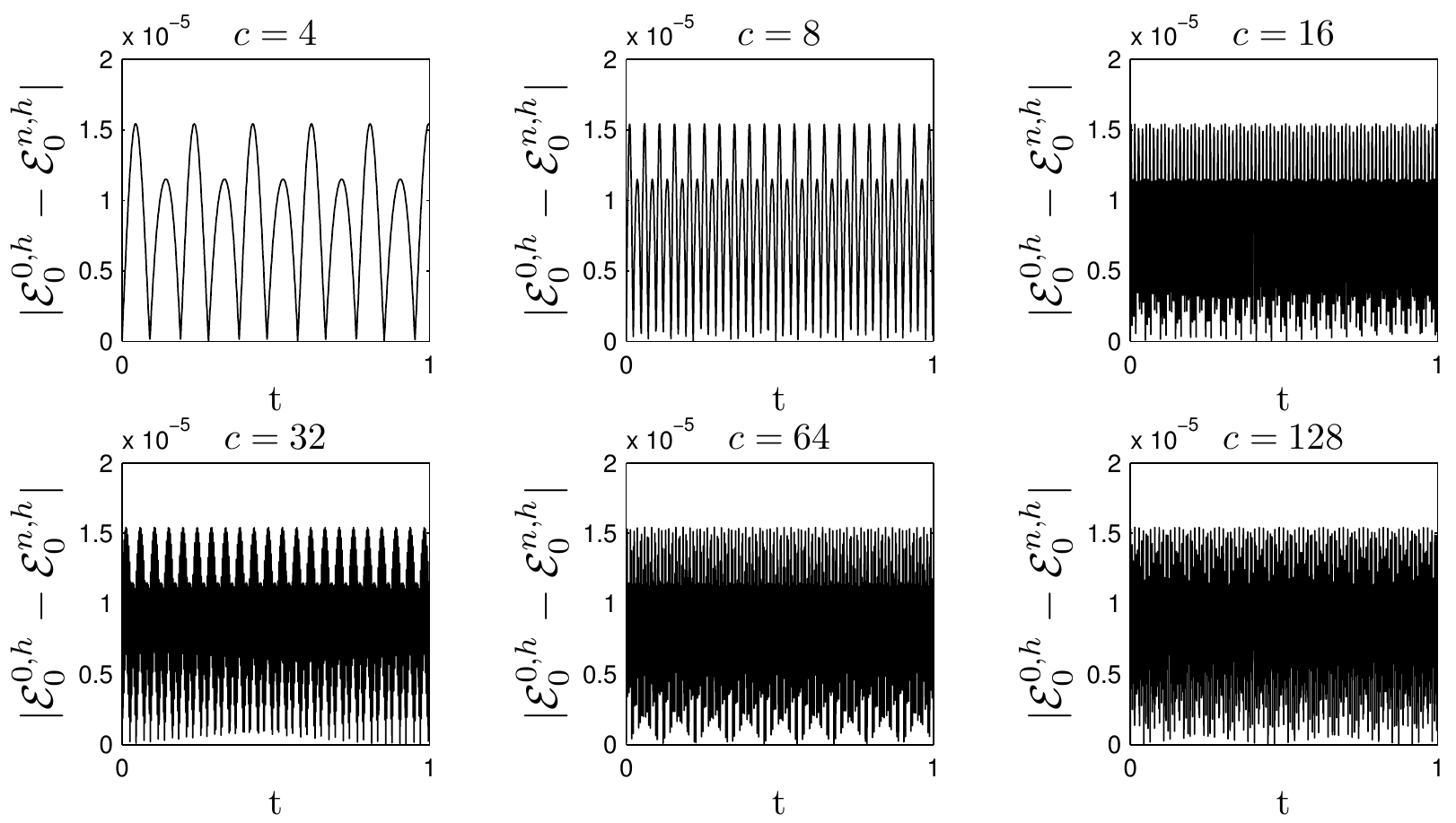}
\caption{Numerical Simulation of Example \ref{ex:EQ}: Energy deviation of the first correction term $\Ec_0^{n,h}$, for different values of oscillations $c$.}\label{figureE}
\end{figure}
\begin{figure}[h!]
\centering
\includegraphics[width=0.52\linewidth]{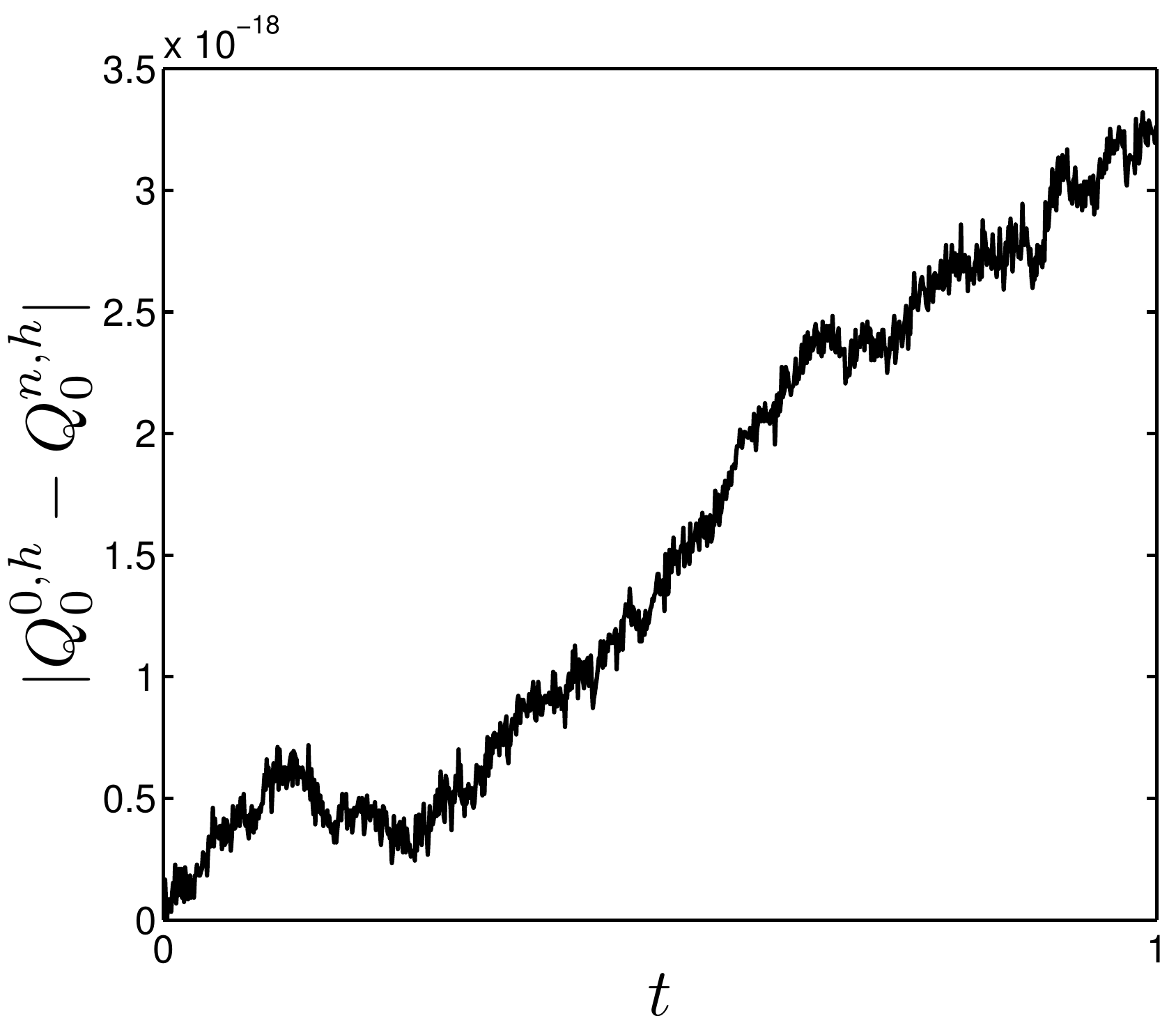}
\caption{Numerical Simulation of Example \ref{ex:EQ}: Charge deviation of the first correction term $Q_0^{n,h}$.}\label{figureQ}
\end{figure}
\end{example}

\begin{figure}[h!]
\centering
\includegraphics[width=0.99\linewidth]{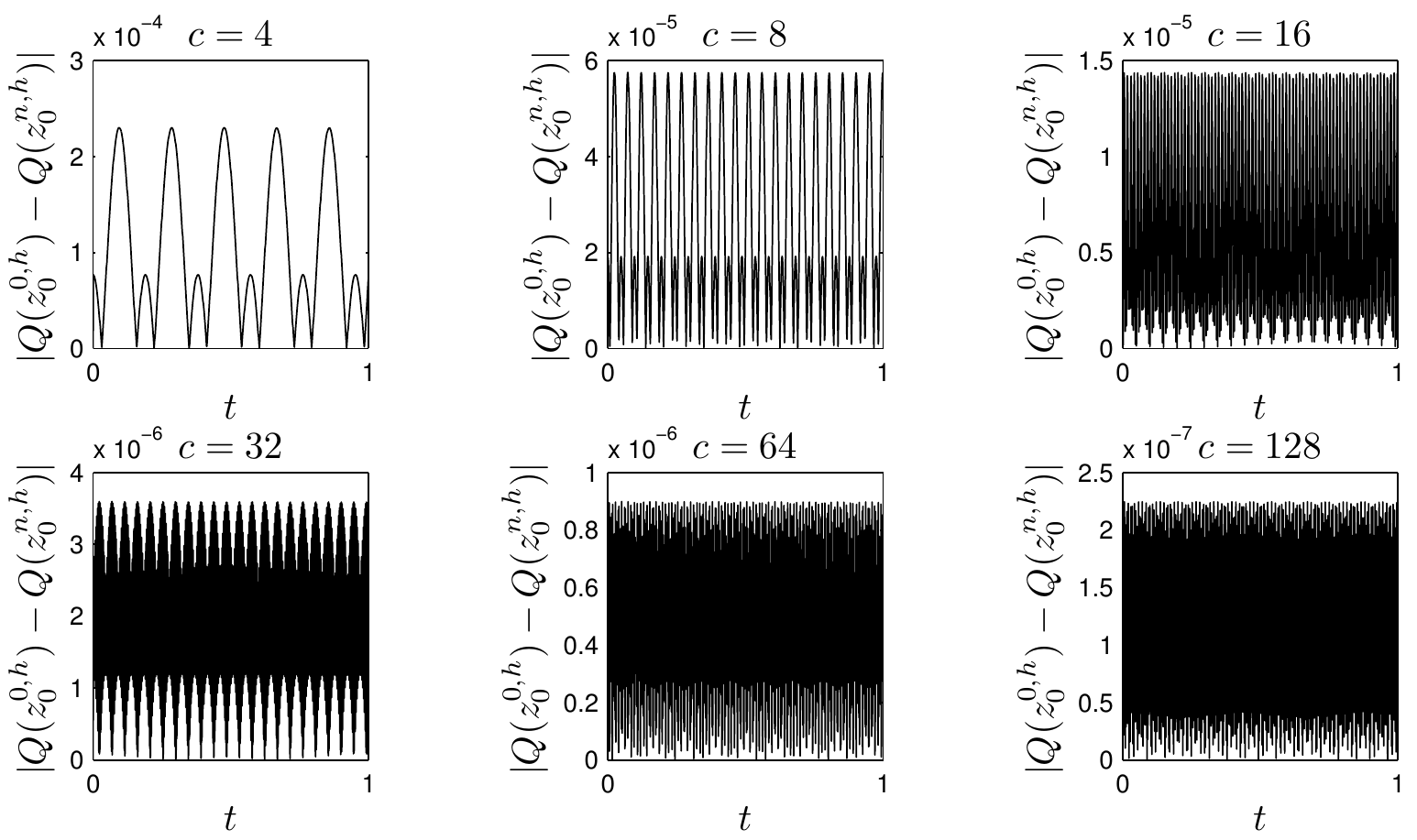}
\caption{Numerical Simulation of Example \ref{ex:EQ}: Charge deviation of $Q(z_0^{n,h})$.}\label{figureQz0}
\end{figure}

\begin{figure}[h!]
\centering
\includegraphics[width=0.99\linewidth]{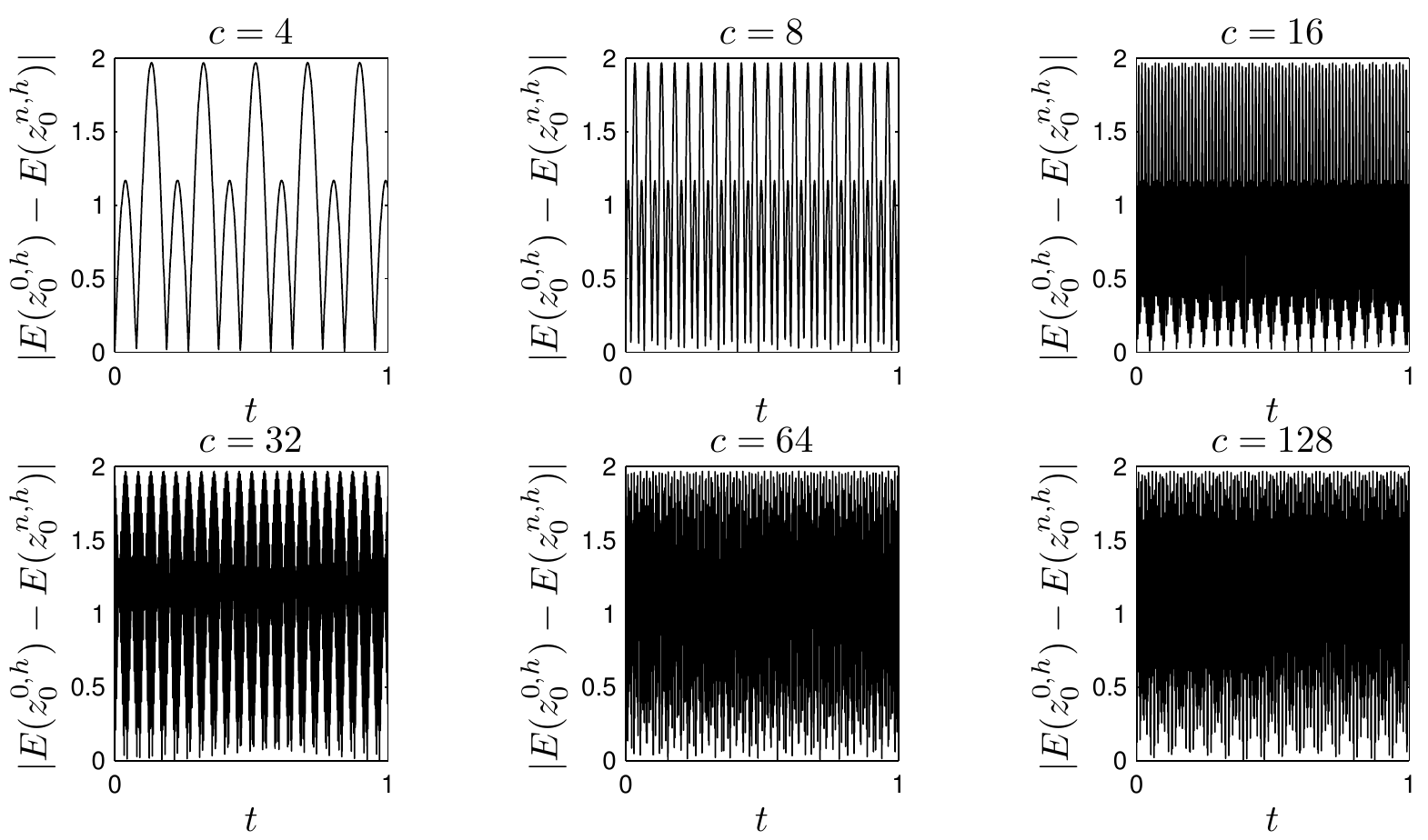}
\caption{Numerical Simulation of Example \ref{ex:EQ}: Energy deviation of $E(z_0^{n,h})$.}\label{figureEz0}
\end{figure}

\end{document}